\documentclass[envcountsect]{svmult}

\usepackage{tikz}
\usetikzlibrary{arrows.meta,arrows}

\def\N{\mathbb{N}}

\def\R{\mathbb{R}}

\def\Aut{\operatorname{Aut}}
\def\spec{\operatorname{spec}}
\def\Fix{\operatorname{Fix}}

\def\deg{\operatorname{deg}}
\def\sp{\operatorname{span}}
\def\dist{\operatorname{dist}}
\definecolor{pink}{RGB}{250, 191, 183}
\definecolor{yellow}{RGB}{253, 249, 196}
\definecolor{blue}{RGB}{178, 226, 242}
\definecolor{orange}{RGB}{255, 218, 158}
\definecolor{purple}{RGB}{197, 198, 200}

\usepackage{type1cm}        % activate if the above 3 fonts are
\usepackage{makeidx}         % allows index generation
\usepackage{graphicx}        % standard LaTeX graphics tool
\usepackage{multicol}        % used for the two-column index
\usepackage[bottom]{footmisc}% places footnotes at page bottom

\usepackage{newtxtext}       % 
\usepackage{newtxmath}       % selects Times Roman as basic font

\usepackage{comment}

\makeindex             % used for the subject index
\begin{document}

\title*{A Symmetry-Based Classification of Synchrony in Tree Networks}
% Use \titlerunning{Short Title} for an abbreviated version of
% your contribution title if the original one is too long
\author{Nícolas Brito and Miriam Manoel}
% Use \authorrunning{Short Title} for an abbreviated version of
% your contribution title if the original one is too long
\institute{Nícolas Brito \at Instituto de Ciências Matemáticas e de Computação, Universidade de São Paulo (ICMC-USP), 
Av. Trabalhador São-carlense, 400 - Centro, São Carlos, SP, Brasil, 13566-590. \email{nicolas.r.brito@usp.br}
\and Miriam Manoel \at Instituto de Ciências Matemáticas e de Computação, Universidade de São Paulo (ICMC-USP), 
Av. Trabalhador São-carlense, 400 - Centro, São Carlos, SP, Brasil, 13566-590. \email{miriam@icmc.usp.br} }
%
% Use the package "url.sty" to avoid
% problems with special characters
% used in your e-mail or web address
%
\maketitle

\begin{abstract}
{Coupled cell systems model interacting dynamical units and provide a natural framework for studying synchrony phenomena arising from collective behavior. Graph symmetries often induce such patterns, but certain networks exhibit additional synchronies not associated with automorphisms, commonly referred to as exotic synchronies. In undirected asymmetric graphs, any synchrony, if present, must be non-symmetry-induced, and determining when such exotic patterns occur remains a challenging structural problem. In this work, we address this question for networks whose underlying coupling graph is a tree, a class of graphs that naturally models hierarchical interactions among elements. We prove that exotic synchronizations do not arise in tree-type networks, showing that every balanced coloring is a fixed-point coloration determined by graph automorphisms. Furthermore, we identify the importance of the role played by the leaves of a graph in this context. Beyond existence results, we investigate the dynamical consequences of these structures by analyzing the linear stability of equilibria and the Lyapunov stability of synchrony subspaces for admissible vector fields defined on tree networks. Particular attention is devoted to cherry-type configurations, where local symmetries generated  by leaves attached to a common vertex influence the stability properties of the associated synchronous states, thereby clarifying how the combinatorial architecture of trees constrains both the emergence and the stability of synchrony.}
\end{abstract}

\keywords{synchrony, symmetry, undirected network, tree graph}

\noindent\textbf{Mathematics Subject Classification:} 37G40, 82B20, 90B10

\section{Introduction}
\label{sec:1}

Networks of interacting dynamical systems arise in a wide range of scientific contexts \cite{ADR, Alessio, VR, HR}. A central phenomenon in such systems is synchrony, where subsets of units evolve identically in time despite the complexity of the underlying interactions. The theory of coupled cell systems, developed by Golubitsky, Stewart, and collaborators \cite{DS, GNS, GSP, GST}, provides a mathematical framework for studying these phenomena, relating dynamical properties to the structure of an associated coupling graph. On one hand, synchronization is often associated with an invariant subspace induced by symmetries of the underlying network. These are the patterns that arise as fixed-point subspaces of a subgroup of the automorphism group of the graph. Some networks, on the other hand, admit additional invariant subspaces not induced by symmetry, the so-called exotic synchrony \cite{AS}. 

For asymmetric networks, a synchrony, if present, is exotic. Explicit examples of such behavior are known, but a general characterization of when exotic synchrony can or cannot occur remains an open and challenging problem, even for networks with one type of cell and one type of arrow. In the directed case, the existence of non-trivial synchronies is common and easily observable even for networks with two or tree vertices, as shown in Fig.~\ref{fig:exotic}.
However, in the undirected case, when we restrict attention to simple networks -- those modeled by undirected graphs with no multiple edges and no self-loops -- our investigations suggest that the existence of synchrony patterns in asymmetric networks is extremely rare. In fact, recent numerical simulations performed on two hundred random asymmetric undirected graphs with 10 to 25 vertices and with probability of having an edge between them varying from 0.2 to 0.6, showed that none of these graphs admit a non-trivial balanced coloration.

Regularity of a graph (all vertices with same valency) is a sufficient condition for total synchrony, namely, all network cells evolving identically. Some simple asymmetric graphs are regular, so emergence of synchrony in such structures is possible. The total synchrony, however, is not necessarily the only possible pattern that may arise in asymmetric networks. In fact, the Frucht graph -- a 3-regular asymmetric graph with 12 vertices -- not only admits the total synchrony but also supports 2-cluster and 3-cluster synchrony patterns (Fig.~\ref{fig:Frucht}).

Asymmetric graphs are particularly relevant from a structural viewpoint. A classical result of Erdős and Rényi \cite{ER} shows that the probability of a random graph being asymmetric tends to $1$ as the number of vertices goes to infinity. Consequently, symmetry should be regarded as a non-generic feature in large undirected networks, and synchronization mechanisms that do not rely on automorphisms become essential for understanding typical network behavior. This observation further motivates the investigation of exotic synchrony in asymmetric graphs as a central problem in the theory of coupled cell systems. 

In this work, we focus on networks whose underlying coupling graph is a tree. From a theoretical perspective, trees represent a minimal class for the study of synchrony in asymmetric graphs, making them particularly suitable to investigate the limits of exotic synchrony. Since every connected graph contains at least one spanning tree that captures its connectivity structure, understanding synchronization mechanisms on trees provides insight into more general network topologies. We point out that there exist infinitely many asymmetric trees \cite{ER}, the smallest having order 7 (Fig.~\ref{fig:assymetric_tree}), which reinforce the relevance of this class as a nontrivial and structurally rich family.

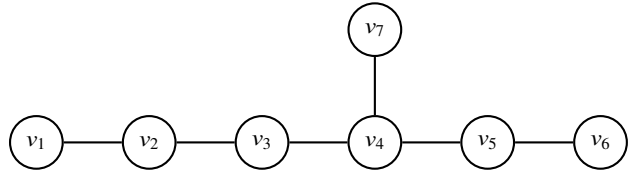
\begin{figure}[htbp]
\label{fig:assymetric_tree}
    \centering
        \begin{tikzpicture}[node distance={1.5cm}, thick, main/.style = {draw, circle, minimum size=0.7cm}] 
            \node[main] (1) {$v_1$}; 
            \node[main] (2) [right of=1] {$v_2$};
            \node[main] (3) [right of=2] {$v_3$}; 
            \node[main] (4) [right of=3] {$v_4$};
            \node[main] (5) [right of=4] {$v_5$};
            \node[main] (6) [right of=5] {$v_6$};
            \node[main] (7) [above of=4] {$v_7$};
            \draw (1) -- (2);
            \draw (2) -- (3);
            \draw (3) -- (4);
            \draw (4) -- (5);
            \draw (4) -- (7);
            \draw (5) -- (6);
        \end{tikzpicture}
    \caption{Example of an asymmetric tree of order $7$.}
\end{figure}

From a dynamical systems perspective, tree-like structures arise as models of a wide range of systems and encode hierarchical interaction structures. Such structures appear, for instance, in models of neuronal dendritic \cite{Dentric}, vascular and respiratory networks \cite{WBE} and phylogenetic relationships \cite{Filo}. In these settings, the acyclic property reflects interaction pathways, such as undirected flow, resource distribution, or signal transmission. 

Vertices of degree one, often referred to as \textit{pendant} vertices, play a key role in these systems, often representing boundary components whose limited connectivity can significantly influence global dynamical behavior. These vertices are also known as {\it leaves} in tree-type graphs and are fundamental in the characterization of synchronization (Propositions~\ref{prop:distleaf}, \ref{prop:samecolorleaf} and Lemma~\ref{lem:classcolor}), as shown in Section~\ref{sec:3}. Consequently, we obtain the main result of this section: tree-type networks do not admit exotic synchrony patterns (Theorem~\ref{thm:no_exotic_trees}). This fact places trees in the class of networks for which synchrony is completely determined by symmetries, establishing a first step for understanding the relationship between synchrony and asymmetry in undirected graphs. 

Cherry configurations -- groups of leaves attached to a common neighbor -- play a significant structural role in trees, and their importance is reflected in their ubiquity: the probability that a random tree contains at least one cherry tends to one as the order of the graph increases \cite{ER}. A graph containing a cherry has a symmetry, namely the permutation that interchanges the  attached leaves. For this reason, beyond existence of a synchrony, we also investigate its stability induced by cherries (Proposition~\ref{prop:alpha} and Corollary~\ref{cor:m-cherry}). We shall see that their local arrangement influences the Lyapunov stability of the synchrony subspace generated by the cherries, as presented in Theorem~\ref{thm:Lyapunov} and Corollary~\ref{cor:final}. 

The paper is organized as follows. Section~\ref{sec:2} is devoted to preliminaries, where we review basic definitions and results on coupled cell systems and synchronization. In Section~\ref{sec:3} we characterize synchrony patterns in tree-type networks through leaves and the pruning process. Finally, in Section~\ref{sec:4}, we analyze the linear stability and Lyapunov stability in admissible tree networks.

%%%%%%%%%%%%%%%% SECTION 2 %%%%%%%%%%%%%%%%%%%%%%%%%%%%%%%%%%

\section{Coupled Cell Systems}
\label{sec:2}

This section is devoted to preliminary background on coupled cell systems and the role of symmetries in the emergence of synchrony patterns. Throughout the paper, we shall use $G$ indistinctly to denote either a network or the associated graph. Network terminology is standard in the coupled cell systems literature, where it is commonly employed when the graph is viewed from a dynamical systems perspective. Following \cite{GST}, a \textit{coupled cell network} $G$ consists of a finite set $V=\{v_1,\cdots,v_n\}$ of \textit{cells} (or vertices), also denoted $V(G)$ when necessary, and a finite set $E$ of \textit{edges} that describe coupling between cells. In each set, we introduce a equivalence relation that describe the types of cells and edges.

In order to relate the dynamics of cells that have the same couplings, we define the \textit{input set} of $c\in V$ as $I(c) = \{e\in E:\mathcal{H}(e) = c\}$ and  say that two cells $c,d$ are \textit{input isomorphic} if there is an isomorphism between their input sets that preserves edge type. In this setting, the network is said to be \textit{homogeneous} if all cells are input isomorphic.

For each cell $c\in V$ the \textit{cell phase space} $P_c$ is defined as a nonzero finite-dimensional vector space, with $P_c=P_d$ if cells $c$ and $d$ are of the same type. Moreover, $P =\prod_{c\in V} P_c$ is the \textit{total phase space} of the network, and 
$$P_{\mathcal{T}(I(c))} = P_{\mathcal{T}(e_1)} \times P_{\mathcal{T}(e_2)} \times \cdots \times P_{\mathcal{T}(e_{|I(c)|})}$$
the \textit{coupling phase space}, with $e_k$ ranging over $I(c)$, where $\mathcal{T}: E \to C$ is the \textit{tail} function which returns the source cell of the edge.

The study of the evolution of the coupled system over time is done through an autonomous ODE $\dot x = f(x)$, and the vector field $f:P\to P$ must be compatible with the coupling dynamics and respect the interactions described by the associated graph. This is the category of \textit{admissible} vector fields: for all $c\in V$, the component $f_c(x)$ of the vector field depends only on the variables $x_c$ and the variables $x_{\mathcal{T}(I(c))}$, that is, there exists $\tilde{f}_c: P_c\times P_{\mathcal{T}(I(c))} \to P_c$ such that $f_c(x) = \tilde{f}_c\left(x_c,x_{\mathcal{T}(I(c))}\right )$. Also, input isomorphic cells share the same governing function, modulo a relabelling of their coordinates through the input isomorphism. For an example of an admissible vector field on a tree network, see Example~\ref{ex:nonlinear}.\\

In this formalism, a synchrony is an equivalence relation $\bowtie$ on $V$ such that $c\bowtie d$ if and only if $x_c(t) = x_d(t)$ for all $t$. Given $P$ the total phase space of the network and $\bowtie$ an equivalence relation that refines $\sim_V$, we define the \textit{polydiagonal subspace} as  
$$\Delta_{\bowtie} = \{x\in P : x_c=x_d \text{ whenever } c\bowtie d, \forall c,d\in V\}.$$
We say that $\bowtie$ is \textit{robustly polysynchronous} if $\Delta_{\bowtie}$ is invariant under any admissible vector field.

On the other hand, an equivalence relation $\bowtie$ on $V$ is said to be \textit{balanced} if for all $c\bowtie d$, $c,d\in V$, there exists an input isomorphism $\beta:I(c)\to I(d)$ such that $\mathcal{T}(e)\bowtie\mathcal{T}(\beta(e))$ for all $e\in I(c)$. Taking a distinct color for each $\bowtie$-class, the coloring is called balanced if $\bowtie$ is a balanced relation.

A central result in the literature provides a complete characterization of synchrony patterns: an equivalence relation $\bowtie$ on $V$ is robustly polysynchronous if and only if $\bowtie$ is balanced. As a balanced relation $\bowtie$ refines $\sim_I$, the study of the network dynamics through \textit{synchrony clusters} gives rise to the concept of a \textit{quotient network}, denoted by $G_{\bowtie}$. Its vertex set is given by the quotient $V_{\bowtie} = V/\bowtie$ and its edge set $E_{\bowtie}$ by the projections of the edges of the original network onto the $\bowtie$ equivalence classes. 
Balanced relations exhibit distinctive features in undirected trees. Recall that trees are connected graphs without cycles. In this setting, balanced relations arise in a particularly structured way, a fact that motivated the present study. An illustration of this phenomenon is given in Fig.~\ref{fig:tree_balanced}, which shows a balanced 3-coloring of the undirected tree with ten vertices and its quotient networks.

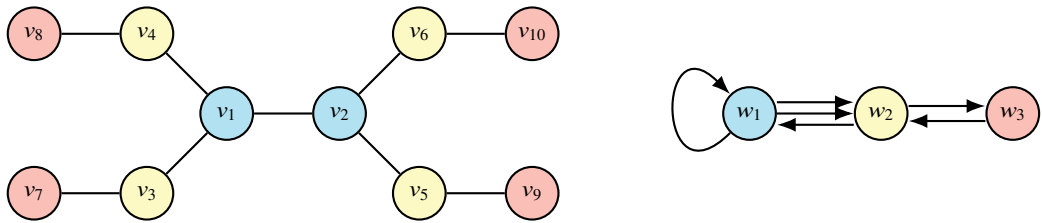
\begin{figure}[htbp]
\label{fig:tree_balanced}
    \centering
    \begin{tikzpicture}
        \node (1) at (0,0) {\begin{tikzpicture}[node distance={1.5cm}, thick, main/.style = {draw, circle, minimum size=0.7cm}] 
            \node[main, fill=blue] (1) {$v_1$}; 
            \node[main, fill=blue] (2) [right of=1] {$v_2$};
            \node[main, fill=yellow] (3) [below left of=1] {$v_3$}; 
            \node[main, fill=yellow] (4) [above left of=1] {$v_4$};
            \node[main, fill=yellow] (5) [below right of=2] {$v_5$};
            \node[main, fill=yellow] (6) [above right of=2] {$v_6$};
            \node[main, fill=pink] (7) [left of=3] {$v_7$};
            \node[main, fill=pink] (8) [left of=4] {$v_8$};
            \node[main, fill=pink] (9) [right of=5] {$v_9$};
            \node[main, fill=pink] (10) [right of=6] {$v_{10}$};
            
            \draw (1) -- (2);
            \draw (1) -- (3);
            \draw (1) -- (4);
            \draw (2) -- (5);
            \draw (2) -- (6);
            \draw (3) -- (7);
            \draw (4) -- (8);
            \draw (5) -- (9);
            \draw (6) -- (10);
        \end{tikzpicture}};

        \node (2) at (7.5,0) {\begin{tikzpicture}[node distance={1.75cm}, thick, main/.style = {draw, circle, minimum size=0.7cm}] 
            \node[main, fill=blue] (1) {$w_1$}; 
            \node[main, fill=yellow] (2) [right of=1] {$w_2$};
            \node[main, fill=pink] (3) [right of=2] {$w_3$}; 

            \draw[-{Latex[length=2.5mm]}]  (1) to[out=-135,in=135, looseness=7] (1);
                        
            \draw [-{Latex[length=2.5mm]}] (1) -- (2);
            \draw [-{Latex[length=2.5mm]}] ([yshift=0.15 cm]1.east) -- ([yshift=0.15 cm]2.west);
            \draw [-{Latex[length=2.5mm]}] ([yshift=-0.15 cm]2.west) -- ([yshift=-0.15 cm]1.east);

            \draw[-{Latex[length=2.5mm]}]  ([yshift=0.1 cm]2.east) -- ([yshift=0.1cm]3.west);
            \draw [-{Latex[length=2.5mm]}] ([yshift=-0.1 cm]3.west) -- ([yshift=-0.1 cm]2.east);
            
        \end{tikzpicture}};
    \end{tikzpicture}
     
    \caption{Example of a balanced coloring on a 10-cell undirected tree (left) and its quotient network (right).}
\end{figure}
    
Symmetries encode structural invariance through vertex permutation, which translates into invariant subspaces and synchrony patterns when viewed dynamically. Following \cite{AS}, a symmetry in a coupled cell network $G$ is a pair of bijections $\sigma = (\sigma_V,\sigma_E)$ on $V$ and $E$, respectively, that preserves cell type, edge type, and adjacency. These are the elements of the \textit{automorphism group} ${\rm Aut}(G)$. We call the network \textit{asymmetric} if the group is trivial. If $\Gamma \subset {\rm{Aut}}(G)$ is a subgroup, then the relation $\bowtie_{\Gamma}$ on $V$ given by
$$c\bowtie_{\Gamma} d \iff \text{ there exists } \sigma\in\Gamma \text{ such that } \sigma_{V}(c) = d$$
is a balanced equivalence relation. Moreover, it is straightforward to see that $\Delta_{\bowtie_{\Gamma}}$ is equal to the \textit{fixed-point subspace} $\operatorname{Fix}(\Gamma)= \{x\in P: \gamma x = x, \forall \gamma\in\Gamma\}$. Therefore, we call a balanced relation $\bowtie$ on a coupled cell network $G$ as a \textit{fixed-point coloration} if $\Delta_{\bowtie} = \operatorname{Fix}({\Gamma})$ for some $\Gamma\subset{\rm Aut}(G)$ subgroup; otherwise, it is called \textit{exotic}.

\begin{figure}[htbp]
    \centering
    \begin{tikzpicture}
       \node (1) at (0,0) {\begin{tikzpicture}[node distance={1.75cm}, thick, main/.style = {draw, circle, minimum size=0.7cm}] 
            \node[main, fill=blue] (1) {$v_1$}; 
            \node[main, fill=blue] (2)  [right of=1] {$v_2$};
        
            \draw[-{Latex[length=2.5mm]}]  (1) to[out=-135,in=135, looseness=7] (1);
            
            \draw[-{Latex[length=2.5mm]}]  (1) -- (2);
            \end{tikzpicture}};

        \node (2) at (8,0) {\begin{tikzpicture}[node distance={1.75cm}, thick, main/.style = {draw, circle, minimum size=0.7cm}] 
            \node[main, fill=blue] (1) {$v_1$}; 
            \node[main, fill=pink] (2)  [below left of=1] {$v_2$};
            \node[main, fill=pink] (3)  [below right of=1]{$v_3$};

            \draw[-{Latex[length=2.5mm]}]  (1) -- (2);
            
            \draw[-{Latex[length=2.5mm]}] ([xshift=0.05 cm]2.north) -- ([yshift=-0.05 cm]1.west);
            
            \draw[-{Latex[length=2.5mm]}]  (1) -- (3);
        
            \draw[-{Latex[length=2.5mm]}]  ([yshift=0.1 cm]2.east) -- ([yshift=0.1cm]3.west);
            \draw [-{Latex[length=2.5mm]}] ([yshift=-0.1 cm]3.west) -- ([yshift=-0.1 cm]2.east);
            \end{tikzpicture}}; 
    \end{tikzpicture}   
    \caption{Exotic patterns of synchrony on a 2-vertex network (left) and on a 3-vertex network (right).}
    \label{fig:exotic}
\end{figure}
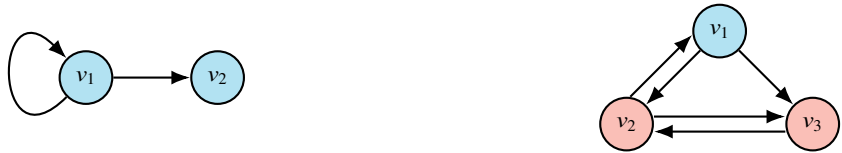

A graph with trivial automorphism group provides a sufficient condition for a balanced relation to be exotic. In Fig.~\ref{fig:exotic} we present the simplest examples of exotic colorings in directed networks with two and tree cells. In Fig.~\ref{fig:Frucht} we also illustrate two examples of exotic balanced relations in the Frucht graph, which is a classical asymmetric undirected graph.

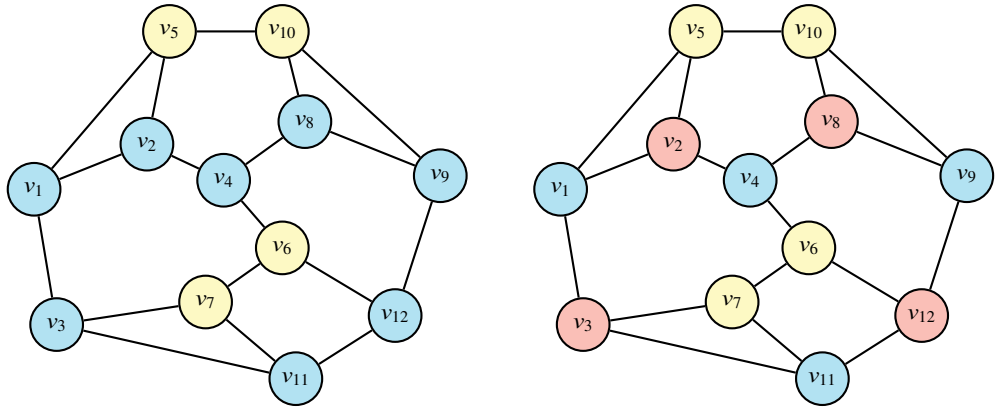
\begin{figure}[htbp]
\centering
    \begin{tikzpicture}
        \node (1) at (0,0) {\begin{tikzpicture}[node distance={1.5cm}, thick, main/.style = {draw, circle, minimum size=0.7cm}, scale=0.6]
        
            \node[main, fill=blue] (1) at (0, 0) {$v_1$};
            \node[main, fill=blue] (2) at (2.5, 1) {$v_2$};
            \node[main, fill=blue] (3) at (0.5, -3) {$v_3$};
            \node[main, fill=blue] (4) at (4.2, 0.2) {$v_4$};
            \node[main, fill=yellow] (5) at (3, 3.5) {$v_5$};
            \node[main, fill=yellow] (6) at (5.5, -1.3) {$v_6$};
            \node[main, fill=yellow] (7) at (3.8, -2.5) {$v_7$};
            \node[main, fill=blue] (8) at (6, 1.5) {$v_8$};
            \node[main, fill=blue] (9) at (9, 0.3) {$v_9$};
            \node[main, fill=yellow] (10) at (5.5, 3.5) {$v_{10}$};
            \node[main, fill=blue] (11) at (5.8, -4.2) {$v_{11}$};
            \node[main, fill=blue] (12) at (8, -2.8) {$v_{12}$};
        
            \draw (1) -- (2);
            \draw (1) -- (3);
            \draw (1) -- (5);
            \draw (2) -- (4);
            \draw (2) -- (5);
            \draw (3) -- (7);
            \draw (3) -- (11);
            \draw (4) -- (6);
            \draw (4) -- (8);
            \draw (5) -- (10);
            \draw (6) -- (7);
            \draw (6) -- (12);
            \draw (7) -- (11);
            \draw (8) -- (9);
            \draw (8) -- (10);
            \draw (9) -- (10);
            \draw (9) -- (12);
            \draw (11) -- (12);
            
            \end{tikzpicture}};

        \node (2) at (7,0) {\begin{tikzpicture}[node distance={1.5cm}, thick, main/.style = {draw, circle, minimum size=0.7cm}, scale=0.6]
        
            \node[main, fill=blue] (1) at (0, 0) {$v_1$};
            \node[main, fill=pink] (2) at (2.5, 1) {$v_2$};
            \node[main, fill=pink] (3) at (0.5, -3) {$v_3$};
            \node[main, fill=blue] (4) at (4.2, 0.2) {$v_4$};
            \node[main, fill=yellow] (5) at (3, 3.5) {$v_5$};
            \node[main, fill=yellow] (6) at (5.5, -1.3) {$v_6$};
            \node[main, fill=yellow] (7) at (3.8, -2.5) {$v_7$};
            \node[main, fill=pink] (8) at (6, 1.5) {$v_8$};
            \node[main, fill=blue] (9) at (9, 0.3) {$v_9$};
            \node[main, fill=yellow] (10) at (5.5, 3.5) {$v_{10}$};
            \node[main, fill=blue] (11) at (5.8, -4.2) {$v_{11}$};
            \node[main, fill=pink] (12) at (8, -2.8) {$v_{12}$};
    
            \draw (1) -- (2);
            \draw (1) -- (3);
            \draw (1) -- (5);
            \draw (2) -- (4);
            \draw (2) -- (5);
            \draw (3) -- (7);
            \draw (3) -- (11);
            \draw (4) -- (6);
            \draw (4) -- (8);
            \draw (5) -- (10);
            \draw (6) -- (7);
            \draw (6) -- (12);
            \draw (7) -- (11);
            \draw (8) -- (9);
            \draw (8) -- (10);
            \draw (9) -- (10);
            \draw (9) -- (12);
            \draw (11) -- (12);
            
        \end{tikzpicture} };
    \end{tikzpicture}
    \caption{2-balanced coloring (left) and 3-balanced coloring (right) in the Frucht graph.}
    \label{fig:Frucht}
\end{figure}

%%%%%%%%%%%%%%%% SECTION 3 %%%%%%%%%%%%%%%%%%%%%%%%%%%%%%%%%%

\section{Synchrony Patterns in Tree-type Networks}
\label{sec:3}

From now on, the term \textit{graph} always refers to a simple graph, that is, undirected with one type of vertex, one type of edge and without multiple edges or loops. It is well known that a homogeneous network modeled by a cycle graph does not admit exotic balanced relations \cite{AM2024}. Cycle graphs are the first class of networks whose synchrony patterns are completely determined by their automorphism group. In this section, we show that the same holds for trees, namely, that no exotic synchrony occurs in this class of graphs.

Two distinct vertices $c,d$ of a graph $G$ are \textit{adjacent}, $c\sim d$, if there is an edge between $c$ and $d$. If $G$ has order $n$ with vertex set $V = \{v_1,\cdots,v_n\}$, the \textit{adjacency matrix} of $G$ is the $n\times n$ matrix $A=(a_{ij})$,
$$a_{ij} = \begin{cases}
      1, & \text{if } v_i \sim v_j\\
      0, & \text{otherwise.}
  \end{cases}$$
If $G$ is simple, then $A$ is symmetric with zero diagonal. The \textit{degree} $\deg(c)$ of a vertex $c$ is the number of vertices adjacent to it, which is also the cardinality of the input set of $c$. Finally, recall that a \textit{path} on $G$ is a sequence of distinct vertices $p = (v_1,\cdots,v_k)$ such that $v_i\sim v_{i+1}$ for all $i\in\{1,\cdots,k-1\}$. Its \textit{length} is defined as $k-1$, and the distance between $v_1$ and $v_k$, denoted by $\dist(v_1,v_k)$, is the length of a shortest path connecting them.

\subsection{Properties of Balanced Relations}

The notion of a multiset will be useful in this subsection. A \textit{multiset} $B$ is a collection of elements in which repetitions are allowed and recorded with multiplicity. Given two multisets $B_1$ and $B_2$, their sum $B_1+B_2$ is defined by combining all elements of $B_1$ and $B_2$, adding multiplicities whenever an element appears in both. For example, if $B_1 = \{1,2,2,3\}$ and $B_2=\{2,3,3\}$, then the seven-element sum is $B_1+B_2 = \{1,2,2,2,3,3,3\}$. These concepts are essential to understand how leaves -- vertices of degree one -- dictate the appearance of balanced relations on tree graphs. 

We first introduce the distance of $c\in V$ to leaves, which is the multiset
\begin{equation}  
\label{eq: Lc}
L_c=\{\dist(c,l): l \text{ is a leaf of } G\},
\end{equation}
and also the multisets $L_c^m = \{\delta\in L_c: \delta\leq m\}$ and $L_c^m\oplus 1 = \{\delta+1:\delta\in L_c^m\}$.We then have:
\begin{prop}
 \label{prop:distleaf}
    Let $G$ be a tree graph, $c,d\in V$ and $\bowtie$ a non-trivial balanced relation on $G$. If $c\bowtie d$, then $L_c = L_d$.
\end{prop}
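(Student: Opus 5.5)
Since $L_c$ and $L_d$ are finite multisets of nonnegative integers, the plan is to show that $L_c^m = L_d^m$ for every $m \geq 0$ whenever $c \bowtie d$, by induction on $m$; taking $m$ at least the diameter of $G$ then gives $L_c = L_d$. For the induction it is convenient to prove the stronger statement $P(m)$: for \emph{all} pairs $c \bowtie d$ simultaneously, $L_c^m = L_d^m$.

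\textbf{Base case $m=0$.} Here $L_c^0$ is $\{0\}$ if $c$ is a leaf and empty otherwise. A balanced relation on a simple graph preserves degree, since the input isomorphism $\beta: I(c)\to I(d)$ is a bijection. So $c \bowtie d$ forces $\deg(c)=\deg(d)$, and $c$ is a leaf if and only if $d$ is.

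\textbf{Inductive step.} This is the key step. The aim is a recursion expressing $L_c^{m+1}$ in terms of the sets $L_u^m$ for the neighbours $u$ of $c$. In a tree, every leaf $l \neq c$ with $\dist(c,l) = \delta \ge 1$ lies behind exactly one neighbour $u$ of $c$, namely the next vertex on the unique path from $c$ to $l$. For that neighbour, $\dist(u,l) = \delta - 1$. For every other neighbour $u'$, $\dist(u',l) = \delta + 1$, since the path from $u'$ to $l$ must pass through $c$. Summing over neighbours, each leaf at distance $\delta$ from $c$ is therefore counted once with value $\delta-1$ and $\deg(c)-1$ times with value $\delta+1$. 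Leaves adjacent to $c$ contribute the value $0$ from their own vertex, and if $c$ is itself a leaf it contributes $1$ to its unique neighbour's set. This bookkeeping should yield an identity of the form
\[
\sum_{u\sim c} L_u^{m} \;=\; \bigl(L_c^{m+1}\text{ shifted down by }1\bigr) + (\deg(c)-1)\bigl(L_c^{m-1}\oplus 1\bigr) + \varepsilon_c,
\]
where $\varepsilon_c$ is a correction term that depends only on whether $c$ is a leaf.

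Now suppose $c \bowtie d$. Balancedness gives a bijection between the neighbours of $c$ and those of $d$ that pairs $\bowtie$-related vertices. By the induction hypothesis $P(m)$, applied to these neighbouring pairs, the left-hand sides for $c$ and $d$ agree. Also $\deg(c)=\deg(d)$, $\varepsilon_c=\varepsilon_d$, and $L_c^{m-1}=L_d^{m-1}$ by the induction hypothesis. Cancelling these common terms in the identity gives $L_c^{m+1}=L_d^{m+1}$, which is $P(m+1)$. Because the recursion uses both $m$ and $m-1$, the induction should be a strong induction, so $P(m)$ and $P(m-1)$ are both available; this also requires checking $P(1)$ directly.

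\textbf{Main obstacle.} The hardest part is getting the multiset identity exactly right: the truncation levels must match, and the edge cases must be handled correctly, namely leaves adjacent to $c$ and $c$ itself being a leaf. If the identity is too messy, a fallback is to argue with counting functions $N_c(k) = \#\{l : \dist(c,l)=k\}$. The tree property gives the linear recursion
\[
\sum_{u\sim c} N_u(k) = N_c(k+1) + (\deg(c)-1)\,N_c(k-1) \qquad (k\ge 1),
\]
with the obvious modification at $k=0$. Strong induction on $k$ then shows $N_c(k)=N_d(k)$ for all $k$, which is the same as $L_c=L_d$. The non-triviality hypothesis on $\bowtie$ plays no role in this argument.
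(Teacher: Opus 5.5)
Your proof is correct. Its skeleton matches the paper's: induction on the truncation level $m$, simultaneously for all $\bowtie$-related pairs, driven by the neighbour bijection that balancedness provides. The key recursion, however, is genuinely different.

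\textbf{Comparison with the paper.} The paper claims that $L_c^{m+1}$ consists of the $k_c=|L_c^{m+1}|$ smallest elements of $\sum_{v\sim c}(L_v^m\oplus 1)$. It then argues separately that $k_c=k_d$. Your identity instead accounts exactly for the ``backtracking'' contributions: each leaf at distance $\delta$ from $c$ also appears with value $\delta+1$ in the sets of the other $\deg(c)-1$ neighbours. You cancel these using $\deg(c)=\deg(d)$ and the hypothesis at level $m-1$.

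This is more than cosmetic, because the paper's truncation formula fails precisely because of those terms. Take the path $(u_1,c,u_2,x_3,x_4,x_5,l)$ with $m=4$. Then $\sum_{v\sim c}(L_v^4\oplus 1)=\{1\}+\{3,5\}=\{1,3,5\}$ and $k_c=2$, yet $L_c^5=\{1,5\}\neq\{1,3\}$. The spurious $3$ is the leaf $u_1$ seen from $u_2$ back through $c$. Your exact identity repairs this step and also makes the separate $k_c=k_d$ argument unnecessary.

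\textbf{Two small corrections.}
\begin{itemize}
\item In the counting version, the identity $\sum_{u\sim c}N_u(k)=N_c(k+1)+(\deg(c)-1)N_c(k-1)$ holds as stated at $k=0$ (with $N_c(-1)=0$). It is at $k=1$, when $c$ is a leaf, that it needs a $+1$ correction, because $c$ itself is counted by its neighbour. On the path $(a,b,z)$ with $c=a$, the left side is $N_b(1)=2$ while the right side is $N_a(2)=1$. This correction depends only on whether $c$ is a leaf, so the induction is unaffected. It is the same phenomenon your $\varepsilon_c$ handles in the multiset version.
\item Strong induction is not needed. $L_c^m=L_d^m$ already gives $L_c^{m-1}=L_d^{m-1}$ by truncation. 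Also, $P(1)$ follows from the identity at $m=0$ with $L_c^{-1}=\emptyset$, so it needs no separate check.
\end{itemize}
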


\begin{proof}
    We prove by induction on $m$ that, for every $c,d \in V$ such that $c \bowtie d$, one has $L_c^{m} = L_d^{m}$.
    For $m = 1$, the relation $c \bowtie d$ implies that $c$ is adjacent to the same number of leaves as $d$, and so $L_c^1 = L_d^1$.
    Assuming true for $m$, we prove for $m+1$. If $k_c$ and $k_d$ are the numbers of elements in $L_c^{m+1}$ and $L_d^{m+1}$ respectively, then 
    $$L_c^{m+1} =\min_{k_c}\Big(\sum_{v\sim c} L_v^m\oplus 1\Big) \hspace{1cm} L_d^{m+1} =\min_{k_d}\Big(\sum_{v\sim d} L_v^m\oplus 1\Big), $$
    where $\min_i(B)$ denotes the multiset consisting of the $i$ smallest elements of the multiset $B$, counted with multiplicities.
    Since $c\bowtie d$, we can associate bijectively each $v\sim c$ to a $v'\sim d$ such that $v\bowtie v'$. So, by the induction hypothesis 
    $$L_c^{m+1} = \min_{k_c}\Big(\sum_{v\sim c} L_v^m\oplus 1\Big) = \min_{k_c}\Big(\sum_{v\sim d} L_v^m\oplus 1\Big).$$

    The last step is to show that $k_d=k_c$. Without loss of generality, assume $k_d>k_c$. Then there exists at least one more leaf $l$ that has distance $m+1$ from $d$ than those from $c$. Let $d'$ be the vertex adjacent to $d$ on the unique path between $d$ and $l$ and $c'$ the bijective correspondence of $d'$ adjacent to $c$ such that $c'\bowtie d'$. Then, there exists at least one more leaf that has distance $m$ of $d'$ than those of $c'$, which is a contradiction since $L_{c'}^m = L_{d'}^m$ by the induction hypothesis. Therefore, $k_d=k_c$, which yields $L_c^{m+1} = L_d^{m+1}$. 
\end{proof}

A fundamental property of trees is that, for any two vertices, there exists a unique path connecting them (see, for example, \cite[Theorem 1.5.1]{Distel}). This feature dialogues with balanced relations on trees for vertices on the same $\bowtie$-classes. The phenomenon is formalized in Propositions~\ref{prop:reflected}, \ref{prop:samecolorleaf} and is illustrated in Fig.~\ref{fig:tree_balanced}~(left). 

\begin{defi}
    Let $p = (v_0,\cdots,v_k)$ be a path of vertices on a graph $G$ and $\bowtie$ a balanced coloring on the graph. We say that $p$ is \textit{$\bowtie$-reflected} if $v_i \bowtie v_{k-i}$ for every $i \in \{0,\cdots,k\}$. 
\end{defi}

\begin{prop}
\label{prop:reflected}
    Let $G$ be a tree graph and $\bowtie$ a non-trivial balanced coloring on $G$. For every two vertices in the same $\bowtie$-class, the path between them is $\bowtie$-reflected.
\end{prop}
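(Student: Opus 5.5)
The plan is to argue by strong induction on $k=\dist(c,d)$ for $c\bowtie d$. If $p=(v_0,\dots,v_k)$ is the path from $c=v_0$ to $d=v_k$, it suffices to prove $v_1\bowtie v_{k-1}$. Indeed, $v_1$ and $v_{k-1}$ are then in the same class at distance $k-2$, and the unique path between them is the inner segment $(v_1,\dots,v_{k-1})$. The induction hypothesis makes that segment reflected, and together with $v_0\bowtie v_k$ this gives that $p$ is $\bowtie$-reflected. The base cases $k=0,1,2$ are trivial, since the middle vertex of a path of length $2$ is reflected to itself.

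The main tool will be a color-refined version of Proposition~\ref{prop:distleaf}. Fix a vertex $u$ with a neighbor $w$, and let $B(u\to w)$ be the branch of $G$ at $u$ containing $w$. I would attach to the pair $(u,w)$ the multiset of color sequences of all paths starting at $u$ and entering $B(u\to w)$. In a tree, paths from $u$ are exactly non-backtracking walks. Their counts satisfy the recurrence
\[
N_{j+1}=A N_j-(D-I)N_{j-1},
\]
which descends to the quotient network $G_{\bowtie}$. By balancedness, the number of paths from $u$ with a prescribed color sequence therefore depends only on the class of $u$.

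With this invariance in hand, I would run an induction modeled on the proof of Proposition~\ref{prop:distleaf}. The goal is a bijection $\beta$ from the neighbors of $c$ to the neighbors of $d$ such that $v\bowtie\beta(v)$ and the branch invariants of $(c,v)$ and $(d,\beta(v))$ coincide.

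To prove $v_1\bowtie v_{k-1}$, I would compare the branch of $c$ through $v_1$, which contains $d$, with the branch of $d$ through $v_{k-1}$, which contains $c$. Reading the path $p$ from $d$ gives a path with color sequence $([v_k],[v_{k-1}],\dots,[v_0])$ inside $B(d\to v_{k-1})$. The corresponding path from $c$ is used for the next step.

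The argument then continues by contradiction. Suppose $v_1\not\bowtie v_{k-1}$, and choose the counterexample pair $(c,d)$ extremally, for instance with $k$ minimal and then with the branch containing the path as small as possible. Matching color sequences should then produce a second pair $c'\bowtie d'$, obtained by moving along the corresponding path from $c$, at smaller distance or in a strictly smaller branch, whose path is also not reflected. This contradicts the extremal choice. Proposition~\ref{prop:distleaf} would be used at this point to rule out the degenerate cases in which the branches are leaf-free or consist of single leaves.

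I expect this last step to be the main obstacle. Equality of path-count invariants is a global statement, and converting it into the local statement that the specific neighbor $v_1$ is matched with $v_{k-1}$ is where the tree structure must really be used.

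The first thing I would try is the following. Compare the counts of paths from $c$ that have color sequence $([v_0],[v_1],\dots,[v_k])$ and end at a vertex of class $[c]$. Split these counts according to the first neighbor, and use uniqueness of paths in trees so that each such path from $c$ lies in exactly one branch.

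If this comparison does not close the argument, the fallback is a pruning induction. Removing all leaves preserves balancedness, because by Proposition~\ref{prop:distleaf} leaves form a union of classes. The pruned tree has smaller order and contains the path $p$ unless $c$ or $d$ is a leaf, which would reduce the problem to the case where $c$ and $d$ are leaves.
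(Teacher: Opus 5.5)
Your reduction to $v_1\bowtie v_{k-1}$ by strong induction on $k$ is correct. The invariance you invoke, that the number of paths from $u$ with a prescribed colour sequence depends only on $[u]$, is true and is the right tool. The scalar recurrence for $N_j$ does not by itself give this colour-refined version. The clean justification is that the number of non-backtracking continuations from $y$, entered from $y^-$, with prescribed colours depends only on $([y^-],[y])$.

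The genuine gap is the decisive step: you never derive a contradiction from $v_1\not\bowtie v_{k-1}$. You say the lifted path ``should'' produce a smaller counterexample, leave the extremal quantity unspecified, and then flag this yourself as the main obstacle. This is not a formality. The pair produced by the lift is again at distance $k$, so minimality of $k$ gives nothing. Likewise, ``the branch containing the path'' shrinks only for one particular branch and orientation; for instance, the branch of $c$ through $v_1$ need not shrink. The appeal to Proposition~\ref{prop:distleaf} for leaf-free branches is vacuous, since every branch of a finite tree contains a leaf.

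Here is what closes it. Apply your invariance to $p$ read from $d$: this gives a path $c=x_0,x_1,\dots,x_k$ with $x_j\bowtie v_{k-j}$. Since $x_1\bowtie v_{k-1}$ and $v_{k-1}\not\bowtie v_1$, we get $x_1\neq v_1$. Also $x_k\bowtie c$, and $x_{k-1}\bowtie v_1$, $x_1\bowtie v_{k-1}$, hence $x_{k-1}\not\bowtie x_1$. So the ordered pair $(x_k,c)$ is again a counterexample. Its path lies in the component of $G\setminus\{c\}$ containing $x_1$, which is strictly contained in the component of $G\setminus\{d\}$ containing $c$. Minimizing the size of the latter over ordered counterexamples $(c,d)$ therefore gives the contradiction.

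Equivalently, and this is the paper's argument, iterate the lift. Each new lift starts at a vertex whose predecessor has colour $[v_1]\neq[v_{k-1}]$. So $d,v_{k-1},\dots,v_1,c,x_1,\dots,x_k,\dots$ is a non-backtracking walk, hence a path, of unbounded length in a finite tree.

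Your pruning fallback also finishes with no extra work. If $c,d$ are leaves and $k\geq 3$, their unique neighbours are $v_1$ and $v_{k-1}$, so balancedness gives $v_1\bowtie v_{k-1}$ immediately, and $(v_1,\dots,v_{k-1})$ lies in $G\setminus L$. Otherwise neither endpoint is a leaf, since degree is preserved, and all of $p$ lies in $G\setminus L$. In either case, induction on the order of the tree completes the proof, using that the restriction of $\bowtie$ to $G\setminus L$ is balanced.
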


\begin{proof}
    Let $c,d\in V$ such that $c\bowtie d$ and  consider the path $p = (c=v_0,v_1,\cdots,v_{k-1},v_k=d)$. Suppose that $\dist(c,d) = k > 2$, otherwise the result follows immediately because there will be either none or only one vertex between $c$ and $d$. 
    Assume by contradiction that $p$ is not $\bowtie$-reflected, i.e., for some $i \in \{1,\cdots,k-1\}$, $v_i \not\bowtie v_{i-i}$. Consider $i_0$ the first index such that this occurs. Since $\bowtie$ is a balanced relation, there exit vertices $v_{i_0-1}^{(1)}, \cdots, v_{k-i_0}^{(1)}$ such that $v_{i_0-1} \sim v_{k-i_0}^{(1)}$ and
    \begin{equation}
    \label{eq:reflected}
        \begin{cases}
        v_{j}^{(1)}\sim v_{j+1}^{(1)}, \; j\in\{i_0-1,\cdots,k-i_0-1\}\\[0.2cm]
        
        v_j^{(1)} \bowtie v_{j}, \; j\in\{i_0-1,\cdots, k-i_0\}.
    \end{cases}
    \end{equation}
    These vertices are all distinct from the vertices defined in $p$, otherwise the tree would have a cycle.

    Since $\bowtie$ is balanced, there must exist $v_{i_0-1}^{(2)}, \cdots, v_{k-i_0}^{(2)}$ such that $v_{i_0-1}^{(1)} \sim v_{k-i_0}^{(2)}$ and do satisfy the coupling/equivalence properties in (\ref{eq:reflected}). Again, these vertices are all distinct from the previously defined vertices, otherwise the tree would have a cycle. The picture of Fig.~\ref{fig:process} illustrates this process.
    
    To respect the balanced coloring, the same process must be repeated. Since the graph is finite, the procedure stops at some iteration step $s \in \N$, that is, for some $r\in\{i_0-1,\cdots,k-i_0\}$, $v_r^{(s)}$ is equal to one of the vertices previously defined. But this is a contradiction because $G$ has no cycles.

    \begin{figure}[htbp]
        \centering
        \begin{tikzpicture}[node distance={1.4cm}, thick, main/.style = {draw, circle, minimum size=0.8cm, inner sep=0cm}] 
            \node[main, fill=blue] (1) {$c$}; 
            \node[main, fill=orange] (2) [right of=1] {$v_1$};
            \node (3) [right of=2] {$\cdots$};
            \node[main, fill=purple] (4) [right of=3] {$v_{i_0-1}$}; 
            \node[main, fill=pink] (5) [right of=4] {$v_{i_0}$}; 
            \node (6) [right of=5] {$\cdots$};
            \node[main, fill=yellow] (7) [right of=6] {$v_{k-i_0}$};
            \node[main, fill=purple, font=\scriptsize] (8) [right of=7] {$v_{k-i_0-1}$};
            \node (9) [right of=8] {$\cdots$};
            \node[main, fill=orange] (10) [right of=9] {$v_{k-1}$};
            \node[main, fill=blue] (11) [right of=10] {$d$};

            \node[main, fill=yellow] (12) [above of=5] {$v_{k-i_0}^{(1)}$};
            \node (13) [right of=12] {$\cdots$};
            \node[main, fill=pink] (14) [right of=13] {$v_{i_0}^{(1)}$};
            \node[main, fill=purple] (15) [right of=14] {$v_{i_0-1}^{(1)}$};

            \node[main, fill=yellow] (16) [above of=14] {$v_{k-i_0}^{(2)}$};
            \node (17) [left of=16] {$\cdots$};
            \node[main, fill=pink] (18) [left of=17] {$v_{i_0}^{(2)}$};
            \node[main, fill=purple] (19) [left of=18] {$v_{i_0-1}^{(2)}$};
            
            \draw (1) -- (2);
            \draw (2) -- (3);
            \draw (3) -- (4);
            \draw (4) -- (5);
            \draw (5) -- (6);
            \draw (6) -- (7);
            \draw (7) -- (8);
            \draw (8) -- (9);
            \draw (9) -- (10);
            \draw (10) -- (11);

            \draw (4) -- (12);
            \draw (12) -- (13);
            \draw (13) -- (14);
            \draw (14) -- (15);

            \draw (15) -- (16);
            \draw (16) -- (17);
            \draw (17) -- (18);
            \draw (18) -- (19);

        \end{tikzpicture}
        \caption{A schematic representation of the coupling process in the proof of Proposition~\ref{prop:reflected}.}
        \label{fig:process}
    \end{figure}
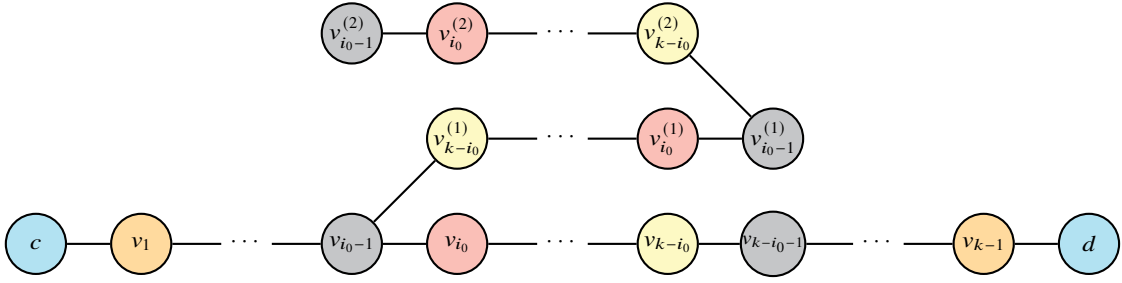

\end{proof}

\begin{cor}
\label{no_vertex_middle}
    Let $\bowtie$ be a non-trivial balanced relation on a tree $G$ and $c,d\in V$. If $c\bowtie d$ then no vertex in the path between $c$ and $d$ belongs to $[c]_{\bowtie}$.
\end{cor}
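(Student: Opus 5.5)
We argue by contradiction, taking the Corollary to follow from Proposition~\ref{prop:reflected}. Let $c\bowtie d$ with $c\neq d$, and let $p=(c=v_0,v_1,\dots,v_k=d)$ be the unique path joining them. Suppose some interior vertex $v_i$, $0<i<k$, lies in $[c]_{\bowtie}$. The idea is to find two vertices of the same class at distance one, or to find a class-path whose reflection forces a contradiction. We will do this by descending to shorter and shorter pairs.

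\textbf{Step 1 (minimal pair).} Among all pairs of distinct vertices in the same class that have a class-mate strictly inside their path, choose one, $(c,d)$, with $k=\dist(c,d)$ minimal. Let $v_i\in[c]_{\bowtie}$ be an interior vertex. The subpaths $(v_0,\dots,v_i)$ and $(v_i,\dots,v_k)$ join same-class vertices and are strictly shorter. By minimality, neither contains a class-mate in its interior.

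\textbf{Step 2 (apply reflection to the two halves).} By Proposition~\ref{prop:reflected}, the subpath from $v_i$ to $v_k$ is reflected, so $v_{i+1}\bowtie v_{k-1}$. Likewise the subpath from $v_0$ to $v_i$ gives $v_1\bowtie v_{i-1}$. The whole path $p$ is also reflected, so $v_i\bowtie v_{k-i}$.
\begin{itemize}
\item If $i\neq k-i$, then $v_{k-i}$ is a class-mate of $c$ lying strictly inside $p$. Because $p$ is reflected, the class-mates of $c$ on $p$ occur at positions $0=j_0<j_1<\dots<j_r=k$ forming a set symmetric under $j\mapsto k-j$.
\item If instead $i=k-i$, then the only interior class-mate sits at the midpoint, and $k=2i$.
\end{itemize}
In either case, minimality forces the consecutive gaps $j_{t+1}-j_t$ to be equal. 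Indeed, consider the pair $(v_{j_t},v_{j_{t+2}})$: it is shorter than $(c,d)$ unless it equals $(c,d)$, yet it contains $v_{j_{t+1}}$ in its interior. This contradicts minimality whenever $r\ge 3$. Hence $r=2$, so $p$ consists of exactly two consecutive class-segments of equal length $i$, with $k=2i$.

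\textbf{Step 3 (the core contradiction).} We now have three class-mates $c,\,m=v_i,\,d$ on a line, with $m$ the midpoint, and no further class-mates on $p$. This is exactly the configuration we must rule out directly, and it is the main obstacle: reflection alone is consistent with it. For example, a path on three vertices colored $a\,b\,a$ is reflected.

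The extra leverage comes from balance at $m$ together with Proposition~\ref{prop:distleaf}. Apply Proposition~\ref{prop:distleaf} to $c$, $m$ and $d$: we have $L_c=L_m=L_d$. Now root the tree at $m$. The branches at $m$ containing $c$ and $d$ are distinct, since they correspond to the neighbors $v_{i-1}$ and $v_{i+1}$. The plan is to compare the farthest leaf from $m$ with the farthest leaves from $c$ and $d$.
\begin{itemize}
\item Let $\ell$ be a leaf maximizing the distance from $m$, say at distance $D$. Then $\ell$ lies in some branch at $m$, so it avoids at least one of the branches containing $c$ or $d$; say it avoids the branch containing $c$.
\item Then $\dist(c,\ell)=i+D>D$, since $i\ge1$.
\item But $\max L_c=\max L_m=D$, which is a contradiction.
\end{itemize}
This proves the result, and it also shows that Step 2 can be bypassed: apply this eccentricity argument directly to any triple $c$, $v_i$, $d$ with $v_i$ strictly inside the path from $c$ to $d$.

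Write $a=\dist(c,v_i)\ge1$ and $b=\dist(v_i,d)\ge1$. The farthest leaf $\ell$ from $v_i$ cannot lie in both the branch toward $c$ and the branch toward $d$. If it avoids the branch toward $c$, then $\dist(c,\ell)=a+D>D$. If it avoids the branch toward $d$, then $\dist(d,\ell)=b+D>D$. Either way this contradicts the equality of the maxima of the multisets in Proposition~\ref{prop:distleaf}.

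So the final proof will be this short eccentricity argument. The only check needed is that distances in a tree add along the unique path through $v_i$, which holds because $\ell$ lies in a branch at $v_i$ different from the one containing $c$ (respectively $d$).
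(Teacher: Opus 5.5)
Your final eccentricity argument is correct, and it takes a different route from the paper. The paper stays inside the framework of Proposition~\ref{prop:reflected}. It first excludes $i\in\{1,k-1\}$, so that $k\ge 4$. It then applies reflection to $p$ and to the subpaths from $c$ to $v_i$ and from $v_i$ to $d$, which gives $v_{i-1}\bowtie v_{i+1}$. Finally it uses balance between $v_i$ (which now has two neighbors in $[v_{i-1}]_{\bowtie}$) and $c$ to force a new neighbor of $c$ in that class, and iterates this into an ever-growing path as in the proof of Proposition~\ref{prop:reflected}.

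You instead invoke Proposition~\ref{prop:distleaf}, and in fact only its weakest consequence $\max L_c=\max L_{v_i}=\max L_d$, i.e.\ equality of eccentricities. A farthest leaf $\ell$ from $v_i$ lies in a single component of $G\setminus\{v_i\}$, so $v_i$ separates it from $c$ or from $d$, and distances add through that cut vertex. This gives a short, fully rigorous proof with no case analysis and no informal iteration. The price is that it rests on Proposition~\ref{prop:distleaf} rather than Proposition~\ref{prop:reflected}. Since only eccentricity is used, you could even make it self-contained by lifting non-backtracking walks along the balanced relation: for $c\bowtie d$, each non-backtracking walk from $c$ lifts to one from $d$ of equal length, and in a tree the longest such walk has length equal to the eccentricity.

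In the write-up, make three changes. Drop Steps~1--2; they are unnecessary, and the $a\,b\,a$ example does not exhibit the three-classmate configuration it is meant to illustrate. Correct the opening sentence, which says you derive the result from Proposition~\ref{prop:reflected}. State explicitly that $v_i$ is interior to a path, hence has degree at least $2$, so $\ell\neq v_i$ and $\ell$ really lies in a branch at $v_i$.
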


\begin{proof}
    Consider the path $p = (c,v_1,\cdots,v_{k-1},d)$ and suppose, by contraction, that there is $i \in \{ 1, \ldots, k-1\}$ with  $v_i\in[c]_{\bowtie}$. First, we notice that $i\neq 1,k-1$, that is, $v_i$ is not adjacent to $c$ or $d$; otherwise, all vertices of $p$ would be in $[c]_{\bowtie}$ by Proposition~\ref{prop:reflected}. By definition of a balanced relation, this would generate an infinite path whose vertices would belong to $[c]_{\bowtie}$. Therefore $k\geq 4$.

    Using Proposition~\ref{prop:reflected} three times -- for the path $p$, for the  path between $c$ and $v_i$ and for the path between $v_i$ and $d$ -- it follows that $v_1 \bowtie v_{k-1}$, $v_1 \bowtie v_{i-1}$ and $v_{i+1} \bowtie v_{k-1}$. By transitivity, $v_{i-1} \bowtie v_{i+1}$ and so there exists a new vertex $v_{i-1}^{(1)}\in V$ adjacent to $c$ such that $v_{i-1}^{(1)}\in[v_{i-1}]_{\bowtie}$. Now, as $G$ has no cycles and to respect the balanced relation, $v_{i-1}^{(1)}$ must receive an input from a new vertex in $[v_2]_{\bowtie}$, and so on, as in the argument presented in the proof of Proposition~\ref{prop:reflected}. This would give an infinite path on $G$, which is a contradiction. 
\end{proof}

A natural question is whether a tree can admit a nontrivial balanced relation in which all leaves belong to distinct $\bowtie$-classes. The following result shows that this is not possible.

\begin{prop}
\label{prop:samecolorleaf}
    If a tree graph $G$ admits a non-trivial balanced relation, then at least two leaves of $G$ are in the same $\bowtie$-class.
\end{prop}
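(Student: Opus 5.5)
We argue by an extremal path argument. Since $\bowtie$ is non-trivial, there exist distinct vertices $c\neq d$ with $c\bowtie d$. Among all such pairs, choose one with $\dist(c,d)=k$ \emph{maximal}. The goal is to show that $c$ and $d$ are themselves leaves, which proves the statement at once.

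Suppose, for contradiction, that $c$ is not a leaf. Consider the path $p=(c=v_0,v_1,\dots,v_k=d)$. Because $\deg(c)\geq 2$, $c$ has a neighbour $w\neq v_1$. By balance, the bijection $\beta:I(c)\to I(d)$ sends $w$ to some neighbour $w'$ of $d$ with $w\bowtie w'$. If $w'\neq v_{k-1}$, then $w'$ lies off the path $p$, and by uniqueness of paths in trees the path from $w$ to $w'$ is $(w,c,v_1,\dots,d,w')$, of length $k+2$. This contradicts the maximality of $k$.

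It remains to treat the case where $\beta$ must send $w$ to $v_{k-1}$, i.e.\ $w\bowtie v_{k-1}$. By Proposition~\ref{prop:reflected}, we also have $v_1\bowtie v_{k-1}$. Counting neighbours of each class, $c$ has at least two neighbours in the class $[v_1]_\bowtie$ (namely $v_1$ and $w$). Hence $d$ has equally many neighbours in that class. At least one such neighbour $w''$ of $d$ is distinct from $v_{k-1}$, and so it lies off $p$. Then $w\bowtie w''$, and these two vertices are at distance $k+2$, again contradicting maximality. The only remaining subcase is $k=1$, where $v_{k-1}=c$. There the same counting gives a neighbour of $d$ other than $c$ in the class of $w$, and the distance becomes $3>1$. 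For $k=0$ there is nothing to check, since $c\neq d$.

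Hence $c$ (and symmetrically $d$) has degree $1$, so both are leaves in the same $\bowtie$-class. The main obstacle is the bookkeeping in the second case, where the input isomorphism might map the extra branch back onto the path. The fix is to use class-counting of neighbours rather than the specific bijection: every class meeting $I(c)\setminus\{v_1\}$ appears among the off-path neighbours of $d$ with the needed multiplicity, which Proposition~\ref{prop:reflected} guarantees via $v_1\bowtie v_{k-1}$. Note that Corollary~\ref{no_vertex_middle} is not needed here.
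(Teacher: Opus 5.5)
Your proof is correct, and it takes a genuinely different route from the paper's.

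The paper starts from an arbitrary pair $c\bowtie d$ of non-leaves. It uses Proposition~\ref{prop:reflected} to match an off-path neighbour $c_1$ of $c$ with an off-path neighbour $d_1$ of $d$. It then walks outward from $c_1$ to a leaf $l$, building step by step a class-matched parallel path $d_1,\dots,d_{k-1}$ from $d$. Finally it invokes Proposition~\ref{prop:distleaf} and balance to get a leaf $l'\bowtie l$ adjacent to $d_{k-1}$.

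You instead fix an equivalent pair at maximal distance. You show that a single outward step, namely the same off-path matching justified by class counting plus $v_1\bowtie v_{k-1}$, would give an equivalent pair at distance $k+2$. Finiteness thus replaces the iteration.

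What your version buys:
\begin{itemize}
\item It is shorter and does not need Proposition~\ref{prop:distleaf}.
\item It makes explicit some bookkeeping the paper leaves implicit. In the paper's iteration, if $c_{i+1}\bowtie c_{i-1}$, an input isomorphism $I(c_i)\to I(d_i)$ could send $c_{i+1}$ back to $d_{i-1}$. Excluding this backtracking requires exactly the neighbour counting you do in your second case.
\item It gives a slightly sharper conclusion: any pair realizing the maximal distance among all pairs of distinct $\bowtie$-equivalent vertices consists of leaves.
\end{itemize}
The paper's construction, in turn, is local and algorithmic. From any equivalent non-leaf pair and any chosen branch at $c$, it produces explicit equivalent leaves whose connecting path passes through $c$ and $d$.

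A minor simplification: your two cases, including $k=1$, can be merged. Since $v_1\bowtie v_{k-1}$ (trivially when $k=1$, because then $v_1=d$ and $v_{k-1}=c$), the classes of the off-path neighbours of $c$ and of $d$ form equal multisets. Hence $w$ always has an off-path partner adjacent to $d$.
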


\begin{proof}
Let $c,d\in V$ be distinct, such that $c \bowtie d$, and let $p$ be the path between them. If $c$ and $d$ are leaves, then the argument is complete. As a leaf cannot be $\bowtie$-equivalent to a non-leaf, we now suppose that $c$ and $d$ are not leaves. Since $\deg(c),\deg(d)\geq2$, there exists $c_1 \sim c$ with $c_1 \notin p$. We claim that there must exist $d_1\sim d$, with $d_1\not\in p$ such that $c_1 \bowtie d_1$. Indeed, given that $\bowtie$ is balanced, if $p = (c,d)$ then the statement follows immediately, otherwise there exists $\beta\in \mathcal{B}(c,d)$ that maps the vertex adjacent to $c$ on $p$ to the vertex adjacent to $d$ on $p$, as $p$ is $\bowtie$-reflected by Proposition~\ref{prop:reflected}. As $d_1\notin p$, $d_1$ is distinct of $c_1$ by the absence of cycles on $G$. Therefore, if $c_1$ is a leaf, so is $d_1$, and the result follows.

Now, suppose that $c_1$ is not a leaf and let $l$ be a leaf at the end of a path $p_0 = (c,c_1,\cdots,c_{k-1},l)$ that runs through $c$ and $c_1$, where $k=\dist(c,l)$. As $\bowtie$ is balanced and $c_1\bowtie d_1$, there exists $d_2 \in V$ distinct of $c_2$ (otherwise $G$ would have a cycle) such that $d_2\sim d_1$ and $d_2\bowtie c_2$. Again, since $d_2\bowtie c_2$, there exists $d_3 \in V$ such that $d_3\sim d_2$, $d_3\bowtie c_3$ and $d_3\neq c_3$. By this argument, we can construct a path $p_0' = (d,d_1,\cdots,d_{k-1})$ such that, for all $i\in\{1,\cdots,k-1\}$, $d_i\bowtie c_i$. By Proposition~\ref{prop:distleaf}, and also by the facts that $\bowtie$ is balanced and $c_{k-1} \bowtie d_{k-1}$, there exists at least one leaf $l' \in V$ such that $l' \sim d_{k-1}$ and $l'\bowtie l$.
\end{proof}

Path graphs are the simplest class of trees and represent linear chains of connections -- two leaves and all other vertices of degree two. We use the results above to prove the absence of exotic synchronies in this case:

\begin{cor}
\label{path}
    A path graph does not admit exotic balanced relation.
\end{cor}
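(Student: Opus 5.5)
The plan is to show that any non-trivial balanced relation $\bowtie$ on a path graph $P_n=(v_1,\dots,v_n)$ coincides with the orbit relation of the reflection $\rho:v_i\mapsto v_{n+1-i}$. Since $\Aut(P_n)=\{\mathrm{id},\rho\}$, the only fixed-point colorations are then the trivial one and $\bowtie_{\langle\rho\rangle}$. The trivial relation (all classes singletons) is $\bowtie_{\{\mathrm{id}\}}$, so it is a fixed-point coloration. It therefore suffices to prove that a non-trivial balanced $\bowtie$ satisfies $v_i\bowtie v_j$ if and only if $j\in\{i,n+1-i\}$.

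First step: use Proposition~\ref{prop:samecolorleaf}. Since $\bowtie$ is non-trivial, two leaves lie in the same class, and the only leaves are $v_1$ and $v_n$, so $v_1\bowtie v_n$. Then Proposition~\ref{prop:reflected} applied to the path $(v_1,\dots,v_n)$ gives $v_i\bowtie v_{n+1-i}$ for all $i$. Hence $\bowtie_{\langle\rho\rangle}$ refines $\bowtie$.

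Second step, the main point: show that no additional identifications occur. Suppose $v_i\bowtie v_j$ with $i<j$ and $j\neq n+1-i$. First rule out the case where $v_i,v_j$ lie on the same side, which is equivalent to $i+j\ne n+1$. Apply Proposition~\ref{prop:distleaf}: $L_{v_i}=\{i-1,n-i\}$ and $L_{v_j}=\{j-1,n-j\}$ as multisets. Equality forces either $i-1=j-1$, contradicting $i<j$, or $i-1=n-j$, which means $j=n+1-i$. This is a contradiction, so $\bowtie$ equals $\bowtie_{\langle\rho\rangle}$, and $\Delta_{\bowtie}=\Fix(\langle\rho\rangle)$.

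The only subtle point is the bookkeeping with $L_c$ as a multiset, including the degenerate case $i-1=n-i$ for the middle vertex of an odd path. That case is harmless because the middle vertex then forms a singleton class. Proposition~\ref{prop:distleaf} does all the real work, so I expect no genuine obstacle, only care in invoking non-triviality before using the earlier propositions. Alternatively, Corollary~\ref{no_vertex_middle} could be used instead of Proposition~\ref{prop:distleaf}: if $v_i\bowtie v_j$ and also $v_i\bowtie v_{n+1-i}$, one of these vertices lies strictly between the other two, which that corollary forbids.
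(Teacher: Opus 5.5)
Your proposal is correct and follows the paper's argument: Proposition~\ref{prop:samecolorleaf} forces $v_1\bowtie v_n$, Proposition~\ref{prop:reflected} makes $P_n$ $\bowtie$-reflected, and $\Aut(P_n)=\{\mathrm{id},\rho\}$ concludes. Your second step uses Proposition~\ref{prop:distleaf} with $L_{v_i}=\{i-1,n-i\}$ to exclude further identifications, and so makes explicit the reverse inclusion $\Fix(\langle\rho\rangle)\subseteq\Delta_{\bowtie}$, which the paper's proof leaves implicit (the ``same side'' wording there is inaccurate, but the multiset computation covers every $j\neq n+1-i$ without any case split).
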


\begin{proof}
    Let $\bowtie$ be a non-trivial balanced relation on the path graph $P_n=(v_1,\cdots,v_n)$ of order $n$. By Proposition~\ref{prop:samecolorleaf} $v_1\bowtie v_n$, and by Proposition~\ref{prop:reflected} $P_n$ is $\bowtie$-reflected. As $\Aut(P_n) = \{id, \phi\}$, where $\phi$ maps $v_i$ to $v_{n-i+1}$ for every $i \in \{1,\cdots,n\}$, we have that $\Delta_{\bowtie} = \Fix(\Aut(P_n))$ and the result follows.
\end{proof}

\subsection{Fixed-point coloration on trees}

Recall that a connected graph $G$ of order $n$ is a tree if and only if $G$ has $n-1$ edges. We use this characterization to present a pruning process on a tree graph, which generates a new tree and preserves balanced relations. If $L$ is the set of the leaves of $G$, we denote by $G\setminus L$ the graph obtained by removing the vertices of $L$ (and all edges incident to the vertices in $L$).

\begin{prop}
    If $\bowtie$ a balanced relation on a tree $G$, then $G\setminus L$ is a tree and $\left. \bowtie\right|_{G\setminus L}$ is a balanced relation on $G\setminus L$.
\end{prop}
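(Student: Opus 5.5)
Two things need proving: $G\setminus L$ is a tree, and the restriction of $\bowtie$ to it is balanced.

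\emph{Tree part.} We may assume $G$ has at least three vertices, since otherwise $G\setminus L$ is empty. Then no two leaves are adjacent, so every edge of $G$ incident to a leaf meets exactly one leaf. Removing the $|L|$ leaves therefore removes exactly $|L|$ edges, so $G\setminus L$ has $n-|L|$ vertices and $n-1-|L|$ edges. Connectedness holds because the unique path in $G$ between two non-leaves has all interior vertices of degree at least two, so it avoids $L$ and lies in $G\setminus L$. A connected graph with one fewer edge than vertices is a tree. (Acyclicity is also immediate, since a subgraph of a tree has no cycles.)

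\emph{Balance.} The key observation is that $\bowtie$ never relates a leaf to a non-leaf. Indeed, $c\bowtie d$ requires an input isomorphism $I(c)\to I(d)$, so $\deg(c)=\deg(d)$. Hence the leaf set $L$ is a union of $\bowtie$-classes, and the restriction $\bowtie|_{G\setminus L}$ is an equivalence relation on $V\setminus L$ whose classes are exactly the classes of $\bowtie$ that avoid $L$.

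Now take $c,d\in V\setminus L$ with $c\bowtie d$, and let $\beta:I(c)\to I(d)$ be an input isomorphism in $G$ with $\mathcal{T}(e)\bowtie\mathcal{T}(\beta(e))$. Split the neighbours of $c$ into leaves and non-leaves. Since the tail of $e$ and the tail of $\beta(e)$ are $\bowtie$-equivalent, they have equal degree, so they are both leaves or both non-leaves. Thus $\beta$ maps edges from leaf neighbours of $c$ onto edges from leaf neighbours of $d$, and likewise for non-leaf neighbours. The input set of $c$ in $G\setminus L$ consists exactly of the edges from its non-leaf neighbours, so $\beta$ restricts to a bijection $I_{G\setminus L}(c)\to I_{G\setminus L}(d)$. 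This restriction still satisfies the coloring condition, which is exactly balance of $\bowtie|_{G\setminus L}$.

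There is no real obstacle here. The only points needing care are the degree-preservation observation and the degenerate small cases: if $G$ is a single edge or a single vertex, $G\setminus L$ is empty and the statement is vacuous or conventional. If the paper intends ``non-trivial'' to be preserved, note that the restriction may become trivial, but the statement only claims balance.
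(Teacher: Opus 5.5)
Your proposal is correct and follows the paper's proof: an edge count plus connectedness for the tree part, and, for balance, the fact that $\bowtie$-equivalent vertices have equal degree, so $\beta$ sends leaf-tailed inputs to leaf-tailed inputs and restricts to an input isomorphism on $G\setminus L$. You are somewhat more careful than the paper, which asserts connectedness of $G\setminus L$ without justification and ignores the $K_2$ case where one edge joins two leaves; your one slip is harmless: a single vertex has degree $0$, so it is not a leaf and $G\setminus L=G$ in that case, not the empty graph.
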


\begin{proof}
    $G$ has $|V|-1$ edges and each leaf of $G$ is connected to the rest of the tree by one edge. Then, by removing $L$ of $G$ we also remove $|L|$ edges of $G$. But $G\setminus L$ is connected with $|V|-|L|$ vertices and $|V|-|L|-1$ edges, so $G\setminus L$ is a tree.
    
    For the second claim, if $c\bowtie d$ then there exists an input isomorphism $\beta: I(c)\to I(d)$ such that $\mathcal{T}(e) \bowtie \mathcal{T}(\beta (e))$ for all $e\in I(c)$. If $\mathcal{T}(e) \in L$, then $\mathcal{T}(\beta (e)) \in L$, because a leaf can only be balanced equivalent to a leaf. Therefore, $\left. \beta\right|_{G\setminus L}$ is an input isomorphism such that $\mathcal{T}(e) \bowtie \mathcal{T}(\beta (e))$ for all edges $e$ in the input set of $c$ in the network $G\setminus L$. 
\end{proof}

The pruning process above can be repeated by removing the leaves of $G\setminus L$, and so on. The procedure ends at step $s$, say,  when the remaining graph is either the 2-vertex graph (and one edge) or the 1-vertex graph (and no edge), since the next step would result in the empty graph. Let
\begin{equation}
\label{eq:pruning}
G^0 = G ,  \ L^0=L, \  G^i = G^{i-1}\setminus L^{i-1},  \ \  i\in\{0,\cdots,s\}, 
\end{equation} where $L^{i-1}$ denotes the set of the leaves of the tree $G^{i-1}$. This process is a method to obtain a balanced class via leaves of an iterated graph in an algorithmic way: 

\begin{lem}
\label{lem:classcolor}
    If $\bowtie$ is a non-trivial balanced relation on a tree $G$ then for each $c \in V$ there exists an index $i\in\{0,\cdots,s\}$ such that all vertices of $[c]_{\bowtie}$ are leaves of $G^i$.
\end{lem}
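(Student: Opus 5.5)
The plan is to define, for each vertex $c$, its \emph{pruning level} $\lambda(c)$ as the unique index $i$ such that $c \in L^i$, i.e.\ $c$ is a leaf of $G^i$ but $c \notin G^{i+1}$. At the final step, where $G^s$ is a single vertex or an edge, we adopt the convention that all vertices of $G^s$ are leaves of $G^s$; for the 1-vertex graph, the isolated vertex is regarded as such. Every vertex is removed at exactly one stage, so $\lambda$ is well defined on $V$. The lemma then reduces to the claim that $c \bowtie d$ implies $\lambda(c)=\lambda(d)$. Indeed, if this holds, all of $[c]_{\bowtie}$ lies in $L^{\lambda(c)}$, so all of these vertices are leaves of $G^{\lambda(c)}$.

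I would prove the claim by induction on $i$: for every $i$, if $c\bowtie d$ and $c\in L^i$, then $d\in L^i$. Two facts drive the induction. The first is the pruning proposition: $\bowtie|_{G^i}$ is balanced on $G^i$ for every $i$, obtained by iterating it, since the restriction of a balanced relation to $G\setminus L$ is again balanced. The second is the remark used in that proof: in a tree, under a balanced relation, a leaf can only be equivalent to a leaf, because equivalent vertices have equal degree.

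The base case $i=0$ is exactly this second fact, since $c$ a leaf of $G$ and $c\bowtie d$ give $\deg d=1$. For the inductive step, suppose $c\in L^i$ and $c\bowtie d$. By the induction hypothesis applied to smaller indices, the classes of vertices with levels below $i$ are unions of full classes. Hence $c$ and $d$ both survive to $G^i$: if $d$ had level $j<i$, then $c$ would also have level $j$. Now $\bowtie|_{G^i}$ is balanced on the tree $G^i$. In $G^i$ the degree of $c$ is $1$, or $0$ at the final step. Balancedness in $G^i$ forces $\deg_{G^i}(d)=\deg_{G^i}(c)$, so $d\in L^i$.

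The main obstacle is the degenerate final stage. If $G^s$ is a single edge $\{u,w\}$ with $u\bowtie w$, or a single vertex, the argument above still works by degree count. However, we must make sure that the restricted relation on $G^s$ remains balanced, that the degree-zero case is handled consistently with the convention, and that $\bowtie|_{G^i}$ is a genuine balanced relation even if it becomes trivial on some $G^i$. I would check that the pruning proposition never used non-triviality, so it applies verbatim at every stage. Non-triviality of $\bowtie$ in the hypothesis is then only inherited from the setting and is not needed for this lemma.
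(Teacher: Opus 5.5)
Your argument is correct, but it takes a different route from the paper's. The paper proves the lemma in one line, as a consequence of Proposition~\ref{prop:distleaf} ($c\bowtie d\Rightarrow L_c=L_d$). You never use leaf-distance multisets. Instead you induct on the pruning stage, using only that $\bowtie|_{G^i}$ remains balanced on each $G^i$ (by iterating the pruning proposition, which indeed needs no non-triviality) and that equivalent vertices have equal degree in $G^i$. The closure step then guarantees that $d$ survives to $G^i$.

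What your version buys is that the mechanism is explicit. The paper's appeal is shorter but hides the key step, because the equality $L_c=L_d$ in $G$ does not by itself determine the stage at which a vertex is pruned. For instance, start from the path $(a_1,z,b_1)$. Attach to $a_1$ a leaf, a pendant path with three edges, and a vertex $c$ carrying one pendant leaf. Attach to $b_1$ a vertex $d$ carrying one pendant leaf and a pendant path with two edges. Then $L_c=L_d=\{1,2,4,5,6\}$, yet $c\in L^1$ while $d\in L^2$.

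So to make the one-liner rigorous, one must apply Proposition~\ref{prop:distleaf} inside every $G^i$ to the restricted relation. That needs the pruning proposition anyway, and then uses only the trivial fact that $0\in L_c$ exactly when $c$ is a leaf, i.e.\ your degree count.

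Your convention at the final stage is also genuinely needed, not cosmetic. For the star $K_{1,3}$ with its three leaves identified, the singleton class of the center is a leaf of no $G^i$ if leaves must have degree one.
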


\begin{proof}
    It is direct from Proposition~\ref{prop:distleaf}.
\end{proof}

Proposition~\ref{prop:samecolorleaf} and Corollary~\ref{path} naturally lead to the question of whether trees admit exotic relations. With the previous lemma at hand, we are now in a position to state the main theorem of this section.

\begin{theorem}
\label{thm:no_exotic_trees}
    Every balanced coloring on a tree $G$ is a fixed-point coloration.
\end{theorem}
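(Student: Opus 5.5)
Given a balanced coloring $\bowtie$ on the tree $G$, we will exhibit a subgroup $\Gamma\subset\Aut(G)$ with $\Delta_{\bowtie}=\Fix(\Gamma)$. Equivalently, the orbits of $\Gamma$ on $V$ must be exactly the $\bowtie$-classes. The natural candidate is
$$\Gamma=\{\sigma\in\Aut(G): \sigma(v)\bowtie v \text{ for all } v\in V\},$$
the group of color-preserving automorphisms. Every $\Gamma$-orbit lies inside a single $\bowtie$-class by construction. The work is the converse: for every pair $c\bowtie d$ we must produce some $\sigma\in\Gamma$ with $\sigma(c)=d$. The trivial coloring is handled by $\Gamma=\{id\}$.

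\textbf{Step 1: rooting the tree canonically.} We use the pruning sequence (\ref{eq:pruning}). The final stage $G^s$ is either a single vertex (a center) or a single edge (a bicenter). We first check that the center is preserved by the coloring:
\begin{itemize}
\item By Lemma~\ref{lem:classcolor}, every $\bowtie$-class consists of leaves of one fixed $G^i$. Hence equivalent vertices have the same pruning level, i.e.\ the same eccentricity.
\item In the single-center case, the center is therefore alone in its class.
\item In the bicenter case $\{a,b\}$, either $a\not\bowtie b$, or $a\bowtie b$, in which case the edge $ab$ is the $\bowtie$-reflected path of Proposition~\ref{prop:reflected}.
\end{itemize}
We root $G$ at the center. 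In the bicenter case we split the edge conceptually, obtaining two rooted halves $T_a$ and $T_b$.

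\textbf{Step 2: an inductive isomorphism lemma.} For each vertex $v$, let $T_v$ be the subtree of descendants of $v$ in the rooted tree. The claim we prove by induction on height (equivalently, on pruning level) is:
\begin{quote}
If $c\bowtie d$, then there is a rooted isomorphism $\phi:T_c\to T_d$ with $\phi(v)\bowtie v$ for all $v\in T_c$.
\end{quote}
The argument runs as follows.
\begin{itemize}
\item Since $c,d$ lie at the same level, their parents have the same color when the parents exist. This follows from balance together with the level-preservation in Step~1: the neighbor of $c$ at the next higher level must correspond to the neighbor of $d$ at the higher level.
\item The input isomorphism $\beta$ given by balance therefore restricts to a color-preserving bijection between the children of $c$ and the children of $d$.
\item Each matched pair of children is $\bowtie$-equivalent, so by induction there is a color-preserving rooted isomorphism between their subtrees.
\item Gluing these subtree isomorphisms together, and sending $c\mapsto d$, gives $\phi$.
\end{itemize}
The base case is leaves, where $T_c=\{c\}$ and $T_d=\{d\}$.

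\textbf{Step 3: assembling a global automorphism.} Given $c\bowtie d$ with $c\neq d$, let $w$ be the closest common ancestor of $c$ and $d$, with $c',d'$ the children of $w$ on the paths toward $c$ and $d$.
\begin{itemize}
\item By Proposition~\ref{prop:reflected}, the path from $c$ to $d$ is $\bowtie$-reflected. Since $c$ and $d$ are at equal depth, this path is symmetric about $w$, so $c'\bowtie d'$.
\item Step~2 gives a color-preserving isomorphism $\phi:T_{c'}\to T_{d'}$. Its restriction to the path from $c'$ down to $c$ is forced by the reflected-path structure, so we can arrange $\phi(c)=d$: choose $\phi$ along that path using the matched children.
\item Define $\sigma$ to swap $T_{c'}$ and $T_{d'}$ via $\phi$ and $\phi^{-1}$, and to fix every other vertex.
\end{itemize}
Then $\sigma$ is an automorphism lying in $\Gamma$, with $\sigma(c)=d$. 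In the bicenter case with $a\bowtie b$, the same construction applied to $T_a\to T_b$ covers any pair split across the two halves.

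\textbf{Main obstacle.} The delicate point is Step~2's use of balance with respect to the rooted structure. We must justify that, for $c\bowtie d$, the input isomorphism sends the parent of $c$ to the parent of $d$, rather than to a child of $d$ of the same color. The plan is to use the pruning levels: by Lemma~\ref{lem:classcolor}, each color class sits at a single level. A parent lies at a strictly higher level than its children, since in the rooted-at-center tree the parent is pruned later. Hence the parent's color class differs from every child's color class, which settles the matching. One subtlety remains in the bicenter case, where $a$ and $b$ are each other's "parent". This is why we treat $T_a$ and $T_b$ separately and handle the edge $ab$ via Step~1.
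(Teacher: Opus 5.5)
Your argument is correct in substance, but it takes a different route from the paper.

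\textbf{Comparison with the paper.} The paper constructs a \emph{single} automorphism. It starts from the identity on $G^{i_0}$, the largest pruned subtree on which $\bowtie$ restricts to the trivial relation. It then moves outward through the pruning sequence and, for each new class, adds one cycle ordered as in (\ref{eq:cycle}). This ordering makes the $aq_b$ vertices of a class hanging from a class of size $q_b$ rotate compatibly with the cycle already placed on that class. The cycles of the resulting $\phi$ are exactly the $\bowtie$-classes, so $\Delta_{\bowtie}=\Fix(\langle\phi\rangle)$.

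You instead take the whole group of color-preserving automorphisms and prove it is transitive on each class. You do this with a color-preserving rooted-isomorphism lemma and a swap of the two branches at the lowest common ancestor. The paper's construction gives the sharper statement that a cyclic subgroup always suffices. Yours is the standard tree-isomorphism argument, and it is more careful on two points the paper leaves implicit:
\begin{itemize}
\item You justify, via pruning levels, that an input isomorphism must send parent to parent. The paper uses this tacitly when it asserts that each new class hangs from a single earlier class.
\item You treat the bicentral case $G^s=\{a,b\}$ with $a\bowtie b$. There no $i_0\le s$ with trivial restriction exists, so the paper's induction would have to start from $\phi_0=(a\,b)$.
\end{itemize}

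\textbf{Two justifications to repair.} Both fixes use only what you already have.

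First, in Step~1 the phrase ``same pruning level, i.e.\ the same eccentricity'' is false in general. Every leaf has level $0$, yet leaves can lie at different distances from the center. Step~3 genuinely needs $c$ and $d$ at equal depth. Derive this instead from the first bullet of Step~2. Write $p^j(v)$ for the $j$-th ancestor of $v$. Then $c\bowtie d$ gives $p^j(c)\bowtie p^j(d)$ as long as both exist. The center is alone in its class; in the bicentral case, the class of $a$ is contained in $\{a,b\}$. Hence the two ancestor chains reach a root ($a$ or $b$ in the bicentral case) at the same step. This gives equal depth and $c'\bowtie d'$ at once, so Proposition~\ref{prop:reflected} is not even needed.

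Second, $\phi(c)=d$ is not ``forced.'' What is true is that you may choose it. At $c'$, and at each vertex further down the path, compose the color-preserving bijection of children with a transposition of two equivalent children. This sends the child on the path to $c$ to the child on the path to $d$; these two children are equivalent by the ancestor-chain argument. Off-path children can then be matched by any isomorphism from Step~2.
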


\begin{proof}
    The proof is based on the tree pruning process defined in (\ref{eq:pruning}) and schematically illustrated in  Fig.~\ref{fig:no_exotic_thm}. Let $\bowtie$ be a balanced relation on a tree $G$. Suppose that $\bowtie$ is non-trivial, otherwise $\Delta_{\bowtie} = \Fix(\{id\})$ and we are done. It suffices to construct an automorphism $\phi: V \to V$ such that $\Delta_{\bowtie} = \Fix(\langle\phi\rangle)$. Let $i_0\in \{0,\cdots,s\}$ be the smallest integer such that $\left. \bowtie\right|_{G^{i_0}}$ is a trivial balanced relation on $G^{i_0}$ and define $\phi_0$ as the trivial permutation of $V(G^{i_0})$. Therefore, $\phi_0$ is an automorphism of $G^{i_0}$ and $\Delta_{\left. \bowtie\right|_{G^{i_0}}} = \Fix(\langle\phi_0\rangle)$. 

    Denote $i_1=i_0-1$. Since $\left. \bowtie\right|_{G^{i_1}}$ is a non-trivial balanced relation on $G^{i_1}$, let
    $$[v_1^1]_{\left. \bowtie\right|_{G^{i_1}}} = \{v_1^1,\cdots,v_1^{q_1}\}, \cdots, [v_{j_1}^1]_{{\left. \bowtie\right|_{G^{i_1}}}}= \{v_{j_1}^1,\cdots,v_{j_1}^{q_{j_1}}\}$$
    be all new equivalence classes of $\left. \bowtie\right|_{G^{i_1}}$, i.e., the equivalence classes not listed in $\left. \bowtie\right|_{G^{i_0}}$. By Lemma~\ref{lem:classcolor}, for each $r\in\{1,\cdots,j_1\}$, $[v_r^1]_{{\left. \bowtie\right|_{G^{i_1}}}} = [v_r^1]_{\bowtie}$. Now, let $\phi_1$ be the extension of $\phi_0$ to $V(G^{i_1})$ by adding, for each $r\in\{1,\cdots,j_1\}$, the permutation cycle $(v_r^1\, v_r^2\, \cdots \, v_r^{q_r})$ to $\phi_0$.

    As $\left. \bowtie\right|_{G^{i_0}}$ is trivial balanced relation, for each $r\in\{1,\cdots,j_1\}$, all elements of $[v_r^1]_{\left. \bowtie\right|_{G^{i_1}}}$ must be adjacent to a unique vertex of $G^{i_0}$, since $\left. \bowtie\right|_{G^{i_1}}$ is a balanced relation on $G^{i_1}$. Therefore, $\phi_1$ is a permutation on $V(G^{i_1})$ that preserves adjacency, i.e., it is an automorphism of $G^{i_1}$. Moreover, the equality $\Delta_{\left. \bowtie\right|_{G^{i_1}}} = \Fix(\langle\phi_1\rangle)$ follows directly by construction. 

        \begin{figure}
        \centering
        \begin{tikzpicture}[x=0.75pt,y=0.75pt,yscale=-1,xscale=1, node distance={1.4cm}, thick, main/.style = {draw, circle, minimum size=0.8cm, inner sep=0cm}]
        
        %Shape
        \draw   (272,75) .. controls (322,55) and (362,60) .. (381,74) .. controls (400,88) and (409,133) .. (379,141) .. controls (349,149) and (345,124) .. (292,134) .. controls (239,144) and (222,95) .. (272,75) -- cycle ;

        \draw   (181,37) .. controls (241,1) and (375,-24) .. (439,15) .. controls (503,54) and (516,163) .. (458,187) .. controls (400,211) and (340,124) .. (237,168) .. controls (134,212) and (121,73) .. (181,37) -- cycle ;
        
        % Texto
        \draw (312,92) node [anchor=north west][inner sep=0.75pt]   [align=left] {$G^{i_0}$};
        \draw (388,162) node [anchor=north west][inner sep=0.75pt]   [align=left] {$G^{i_1}$};
        
        % Pontos na curva
        \coordinate (a) at (242,105);
        \coordinate (b) at (398,105);

        %Classe 1
        \node[main, fill=blue] (1) at (190,140) {$v_1^1$};
        \node (2) at (190,100) {$\vdots$};
        \node[main, fill=blue] (3) at (190,70) {$v_1^{q_1}$};

        \draw (1) -- (a);
        \draw (3) -- (a);

        %Classe ...
        \node[font=\Large] at (315,40) {$\cdots$};

        %Classe j_1
        \node[main, fill=pink] (4) at (450,70) {$v_{j_1}^1$};
        \node (5) at (450,100) {$\vdots$};
        \node[main, fill=pink] (6) at (450,140) {$v_{j_1}^{q_{j_1}}$};

        \draw (4) -- (b);
        \draw (6) -- (b);

        %Classes ligadas ao azul
        \node[main, fill=yellow] (7) at (170,210) {$v_{j_1+1}^1$}; 
        \node (8) at (140,185) {$\ddots$};
        \node[main, fill=yellow, scale=0.75] (9) at (120,160) {$v_{\scriptsize j_1+1}^{\scriptsize (a-1)q_1+1}$};

%(130,180)
        
        \node (10) at (120,100) {$\vdots$};

        \node[main, fill=yellow] (11) at (120,50) {$v_{j_1+1}^{q_1}$};
        \node (12) at (140,15) {\reflectbox{$\ddots$}};
        \node[main, fill=yellow] (13) at (170,0) {$v_{j_1+1}^{aq_1}$};

        \draw (7) -- (1);
        \draw (9) -- (1);
        \draw (11) -- (3);
        \draw (13) -- (3);

        \end{tikzpicture}
        \caption{A schematic representation of the coupling process in the proof of Theorem~\ref{thm:no_exotic_trees}.}
        \label{fig:no_exotic_thm}
    \end{figure}
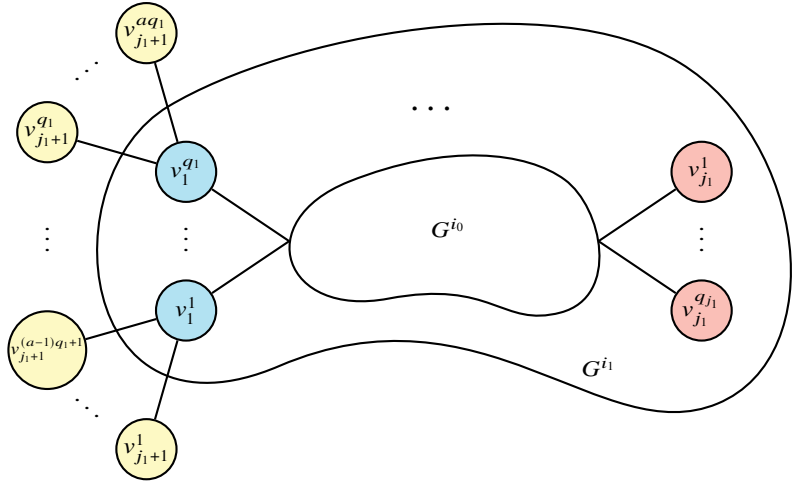
    
    If $G^{i_1} \neq G$ we repeat the process: define $i_2=i_1-1$ and let 
    $$[v_{j_1+1}^1]_{\left. \bowtie\right|_{G^{i_2}}} = \{v_{j_1+1}^1,\cdots,v_{j_1+1}^{q_{j_1+1}}\}, \cdots, [v_{j_1 +j_2}^1]_{{\left. \bowtie\right|_{G^{i_2}}}}= \{v_{j_1+j_2}^1,\cdots,v_{j_1+j_2}^{q_{j_1+j_2}}\}$$
    be all new equivalence classes of $\left. \bowtie\right|_{G^{i_2}}$ (those not listed in $\left. \bowtie\right|_{G^{i_1}}$). Again, by Lemma~\ref{lem:classcolor}, for each $r\in\{j_1+1,\cdots,j_1+j_2\}$, we have $[v_r^1]_{{\left. \bowtie\right|_{G^{i_2}}}} = [v_r^1]_{\bowtie}$. All the elements of a new equivalence class when looked at as vertices of $G^{i_2}$ are adjacent only to vertices of a unique equivalence class among $[v_1^1]_{\left. \bowtie\right|_{G^{i_1}}}, \cdots,[v_{j_1}^1]_{{\left. \bowtie\right|_{G^{i_1}}}}$; otherwise, this would contradict the fact that $\left. \bowtie\right|_{G^{i_2}}$ is balanced as no $\left. \bowtie\right|_{G^{i_1}}$-class could appear in a step back of the iteration. 

    In order to respect the facts that $\left. \bowtie\right|_{G^{i_2}}$ is balanced and $G^{i_2}$ has no cycles, if there is a number  $a$ of vertices in the class $[v_r^1]_{\left. \bowtie\right|_{G^{i_2}}}$, $r\in\{j_1+1,\cdots,j_1+j_2\}$, that are adjacent to $v_b^1$, for some $b\in\{1,\cdots,j_1\}$, then each vertex of the class $[v_b^1]_{\left. \bowtie\right|_{G^{i_1}}}$ must be adjacent to $a$  vertices of $[v_r^1]_{\left. \bowtie\right|_{G^{i_2}}}$ without repetition, that is, $q_r = a  q_b$. Without loss of generality, relabeling the indices if necessary, suppose that $v_r^1,v_r^{q_b+1},\cdots, v_r^{(a-1)q_b+1}$ are the vertices adjacent to $v_b^1$, that $v_r^2,v_r^{q_b+2},\cdots, v_r^{(a-1)q_b+2}$ are the vertices adjacent to $v_b^2$ and so on, i.e., that $v_r^{q_b},v_r^{2q_b},\cdots, v_r^{aq_b}$ are the vertices adjacent to $v_b^{q_b}$. With this notation, construct the permutation cycle given by
    \begin{equation}
    \label{eq:cycle}
    (v_r^1\, v_r^2\, \cdots v_r^{q_b} \, v_r^{q_b+1} \, \cdots \, v_r^{2q_b} \, v_r^{2q_b +1} \cdots \, v_r^{aq_b}).  
    \end{equation}
    This cycle exhausts the vertices of the equivalence class $[v_r^1]_{\left. \bowtie\right|_{G^{i_2}}}$. 
    
    Let $\phi_2$ be an extension of $\phi_1$ to $V(G^{i_2})$ by adding for each $r\in\{j_1+1,\cdots,j_1+j_2\}$ the permutation cycle described in (\ref{eq:cycle}) with its respective $b\in\{1,\cdots,j_1\}$. By construction, $\phi_2$ preserves adjacency on $G^{i_2}$ (it is an automorphism of $G^{i_2}$) and $\Delta_{\left. \bowtie\right|_{G^{i_2}}} = \Fix(\langle\phi_2\rangle)$. If $G^{i_2} = G$, then the argument is complete. Otherwise, it is sufficient to repeat the $i_2$-step described above $i_0-2$ more times.
\end{proof}

\begin{cor}
    If $G$ is an asymmetric tree, then $G$ does not admit a non-trivial balanced coloring.
\end{cor}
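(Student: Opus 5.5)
This corollary should follow from Theorem~\ref{thm:no_exotic_trees} together with the definition of a fixed-point coloration. Let $G$ be an asymmetric tree, so $\Aut(G)=\{id\}$, and let $\bowtie$ be any balanced coloring on $G$. I will show that $\bowtie$ must be the trivial relation, in which every vertex forms its own class.

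First, by Theorem~\ref{thm:no_exotic_trees}, $\bowtie$ is a fixed-point coloration. Hence there is a subgroup $\Gamma\subset\Aut(G)$ with $\Delta_{\bowtie}=\Fix(\Gamma)$. Since $\Aut(G)$ is trivial, necessarily $\Gamma=\{id\}$. Then $\Fix(\Gamma)=P$ is the whole phase space, and so $\Delta_{\bowtie}=P$.

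Second, I will deduce from $\Delta_{\bowtie}=P$ that $\bowtie$ is trivial. Suppose $c\bowtie d$ with $c\neq d$. Every $x\in\Delta_{\bowtie}$ satisfies $x_c=x_d$. Because the cell phase spaces are nonzero, we can choose $x\in P$ with $x_c\neq x_d$, so $\Delta_{\bowtie}\subsetneq P$, a contradiction. Thus every class of $\bowtie$ is a singleton, and $G$ admits no non-trivial balanced coloring.

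An alternative route avoids the equality of subspaces and uses the construction in the proof of Theorem~\ref{thm:no_exotic_trees} directly. If $\bowtie$ were non-trivial, that proof builds an automorphism $\phi$ whose cycles run through the $\bowtie$-classes. Any class with at least two elements produces a cycle of length at least two, so $\phi\neq id$, contradicting asymmetry. There is no real obstacle in either argument. The only point to state with care is that the correspondence $\bowtie\mapsto\Delta_{\bowtie}$ is injective, which holds because the cell phase spaces are nonzero.
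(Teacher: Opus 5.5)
Your proposal is correct and matches the paper, which states this corollary without proof as an immediate consequence of Theorem~\ref{thm:no_exotic_trees}: since $\Aut(G)$ is trivial, $\Delta_{\bowtie}=\Fix(\{id\})=P$, so $\bowtie$ must be the identity relation. Your remark that $\bowtie\mapsto\Delta_{\bowtie}$ is injective because the cell phase spaces are nonzero is exactly the small point needed to close that argument.
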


We end this subsection with a curious fact about synchronization in tree-type graphs, as a consequence of  Lemma~\ref{lem:classcolor}. This says that two vertices of the same $\bowtie$-class are adjacent if and only if the equivalence class is fully determined by them and the tree pruning process ends with these two vertices. We have:

\begin{prop}
\label{prop:adjacent_color}
    Let $\bowtie$ be a non-trivial balanced coloring on a tree $G$ and $c,d\in V$ such that $c\bowtie d$. Then $c$ and $d$ are adjacent if and only if $V(G^s) =[c]_{\bowtie} = \{c,d\}$, where $G^s$ is the tree of step $s$ in (\ref{eq:pruning}).
\end{prop}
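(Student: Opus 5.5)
For the direction ``$\Leftarrow$'' there is essentially nothing to do. Since $G^s$ has one or two vertices, $V(G^s)=\{c,d\}$ with $c\neq d$ forces $G^s$ to be the 2-vertex graph with one edge. That edge joins $c$ and $d$ in $G^s\subset G$, so $c\sim d$.

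For ``$\Rightarrow$'', assume $c\bowtie d$ with $c\sim d$. The first step is to show $[c]_{\bowtie}=\{c,d\}$. Suppose some $e\neq c,d$ satisfies $e\bowtie c$. Because $G$ is a tree, the unique path from $e$ to one of $c,d$ passes through the other. Say the path from $e$ to $d$ is $(e,\dots,c,d)$. Then $c$ is an interior vertex of a path between two $\bowtie$-equivalent vertices $e$ and $d$, and $c$ lies in their class. This contradicts Corollary~\ref{no_vertex_middle}. Hence $[c]_{\bowtie}=\{c,d\}$.

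The second step is to locate $c,d$ in the pruning sequence. By Lemma~\ref{lem:classcolor}, there is an index $i$ such that both $c$ and $d$ are leaves of $G^i$. The edge $cd$ survives in $G^i$, since pruning only deletes vertices of earlier leaf sets. So in the tree $G^i$ both endpoints of the edge $cd$ have degree one. A connected graph containing an edge whose endpoints both have degree one must be exactly that edge. Hence $V(G^i)=\{c,d\}$. The pruning process stops exactly when the remaining graph has one or two vertices, so $i=s$ and $V(G^s)=\{c,d\}=[c]_{\bowtie}$.

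The only delicate point is the use of Corollary~\ref{no_vertex_middle}. It requires the path to genuinely contain a vertex of the class in its interior. Uniqueness of paths in trees guarantees this: for any third vertex $e$, either $c$ lies on the $e$--$d$ path or $d$ lies on the $e$--$c$ path. The remaining bookkeeping, namely that edges between surviving vertices persist through pruning and that the process halts at the 2-vertex stage, is routine.
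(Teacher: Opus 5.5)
Your proof is correct and follows the paper's argument step for step. First, Corollary~\ref{no_vertex_middle} together with uniqueness of tree paths gives $[c]_{\bowtie}=\{c,d\}$; then Lemma~\ref{lem:classcolor} makes $c,d$ adjacent leaves of some $G^i$, which forces $G^i$ to be the terminal two-vertex tree $G^s$, and your explicit $c\neq d$ in the converse is a worthwhile precision, since the stated equivalence fails when $c=d$ is the lone vertex of a one-vertex $G^s$.
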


\begin{proof}
    Suppose that $c$ and $d$ are adjacent. If there is a vertex $v\in V$ in $[c]_{\bowtie}$, then either $c$ is in the path between $d$ and $v$ or $d$ is in the path between $c$ and $v$, and both contradict Corollary~\ref{no_vertex_middle}. Therefore, $[c]_{\bowtie} = \{c,d\}$. For the first equality, by Lemma~\ref{lem:classcolor} there exists an index $i \in \{0,\cdots,s\}$ such that the vertices of $[c]_{\bowtie}$ are leaves of $G^i$. As $G^i$ is a tree with two adjacent leaves, then $i=s$ and so $V(G^s) = \{c,d\}$. The converse is trivial, since the two vertices on the last pruning iteration are adjacent.
\end{proof}

\subsection{Quotient Networks of Trees}

Quotient network of an undirected network is not necessarily undirected; in fact, it is often directed. So, in this subsection, we adopt the following definition of a cycle in a directed graph: it is a non-empty path that starts and ends at the same vertex, following the direction of the edges, and such that all other vertices (and edges) in the path appear exactly once.

\begin{defi}
    Let $G$ be a directed graph with one type of edge. The \textit{undirected simplification} $G^*$ of $G$ is a simple undirected graph having the same set of vertices as $G$ and whose edges are defined as follows: two vertices $c$ and $d$ are joined by an undirected edge whenever there is a 2-cycle connecting them in $G$.
\end{defi}

\begin{prop} \label{prop:G*}
    Let $G$ be a tree graph, $\bowtie$ a balanced relation on $G$ and $G_{\bowtie}$ the quotient network. Then $G_{\bowtie}^*$ is a tree.
\end{prop}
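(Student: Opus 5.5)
The plan is to first describe $G_{\bowtie}^*$ concretely and then check separately that it is connected and that it has no cycles. For acyclicity I will lift a cycle of $G_{\bowtie}^*$ to an infinite non-backtracking walk in $G$, which a finite tree cannot contain.

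\emph{Step 1: describe $G_{\bowtie}^*$.} Let $C\neq D$ be $\bowtie$-classes. In $G_{\bowtie}$, the number of arrows from $D$ to $C$ equals $|\{v\in D: v\sim c\}|$ for any $c\in C$. By balancedness this number does not depend on the choice of $c\in C$. Hence, if some $c\in C$ is adjacent to some $d\in D$, then:
\begin{itemize}
\item every vertex of $C$ has a neighbour in $D$;
\item every vertex of $D$ has a neighbour in $C$;
\item $G_{\bowtie}$ contains arrows in both directions between $C$ and $D$, i.e.\ a $2$-cycle.
\end{itemize}
Conversely, an arrow between $C$ and $D$ only exists if some edge of $G$ joins them. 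Self-loops of $G_{\bowtie}$ (edges inside a class) are discarded in the simplification. So $G_{\bowtie}^*$ is exactly the simple graph on $V/\bowtie$ in which $C\sim D$ if and only if $C\neq D$ and some edge of $G$ joins a vertex of $C$ to a vertex of $D$.

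\emph{Step 2: connectivity.} Let $C,D$ be classes and pick $c\in C$, $d\in D$. Project the path of $G$ from $c$ to $d$ onto classes and delete consecutive repetitions. By Step 1 the result is a walk from $C$ to $D$ in $G_{\bowtie}^*$, so $G_{\bowtie}^*$ is connected.

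\emph{Step 3: acyclicity.} Suppose $G_{\bowtie}^*$ has a cycle $(C_1,C_2,\dots,C_k,C_1)$ with $k\geq 3$ and the $C_i$ pairwise distinct. Take indices modulo $k$. Choose $u_1\in C_1$. By Step 1, every vertex of $C_i$ has a neighbour in $C_{i+1}$, so inductively we can choose $u_{j+1}\in C_{j+1}$ adjacent to $u_j$ for every $j\geq 1$. This gives an infinite walk $(u_1,u_2,\dots)$ in $G$ with $u_j\in C_j$.

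The walk is non-backtracking. Indeed, $u_{j-1}\in C_{j-1}$ and $u_{j+1}\in C_{j+1}$, and $C_{j-1}\neq C_{j+1}$ because $k\geq 3$, so $u_{j-1}\neq u_{j+1}$. Consecutive vertices are also distinct, since they lie in distinct classes.

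In a tree, every non-backtracking walk is a path. If $u_i=u_j$ with $i<j$ and $j-i$ minimal, then the segment $(u_i,\dots,u_j)$ is a closed walk with no backtracking and no other repeated vertices, hence a cycle of $G$ of length at least $3$, which is impossible. So the $u_j$ are pairwise distinct, contradicting the finiteness of $V$. Therefore $G_{\bowtie}^*$ is acyclic, and together with Step 2 it is a tree.

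The step I expect to need the most care is Step 1: verifying that $2$-cycles of $G_{\bowtie}$ correspond exactly to adjacent distinct classes, and in particular that adjacency between classes is symmetric in the quotient. This is what allows the lifting in Step 3 to proceed from every vertex of a class. The remaining parts are standard tree arguments.
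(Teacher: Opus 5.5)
Your proof is correct, but for acyclicity it takes a different route from the paper. The paper lifts a cycle $([c_0]_{\bowtie},\dots,[c_k]_{\bowtie},[c_0]_{\bowtie})$ of $G_{\bowtie}^*$ only once around. This gives a path $(v_0,v_1,\dots,v_k,v_{k+1})$ in $G$ between two vertices of $[c_0]_{\bowtie}$ whose interior vertices lie in pairwise distinct classes. The paper then invokes Proposition~\ref{prop:reflected}: the path between $\bowtie$-equivalent vertices must be $\bowtie$-reflected, which would force $v_1\bowtie v_k$, impossible for $k\geq 2$.

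You instead keep lifting indefinitely. You use only the local consequence of balancedness isolated in your Step 1 (every vertex of $C_j$ has a neighbour in $C_{j+1}$), and conclude from the elementary fact that a finite tree admits no infinite non-backtracking walk.

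Your version buys several things:
\begin{itemize}
\item It is self-contained and does not rely on Proposition~\ref{prop:reflected}, whose proof in the paper is itself an informal iteration of essentially the same lifting mechanism.
\item It makes explicit two points the paper leaves implicit: that the lifted walk really is a path (non-backtracking because $C_{j-1}\neq C_{j+1}$ when the cycle has length at least $3$), and what happens if the one-lap lift closes up, since $v_{k+1}=v_0$ would directly produce a cycle in $G$.
\item Your Step 1 characterizes exactly when two distinct classes are adjacent in $G_{\bowtie}^*$. The paper uses this implicitly when it asserts that the vertices $v_1,\dots,v_k$ exist.
\end{itemize}

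The paper's route, in exchange, presents the result as a corollary of the reflection structure that organizes Section~\ref{sec:3}, and needs only one lap around the quotient cycle.
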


\begin{proof}
    We show that $G_{\bowtie}^*$ is connected and acyclic. Connectivity is trivial as the quotient of an undirected connected network is connected. This follows directly from the fact that if $c\sim d$ in $G$, with $c$ and $d$ in distinct $\bowtie$-classes, then there exists a 2-cycle between $[c]_{\bowtie}$ and $[d]_{\bowtie}$. So,  $[c]_{\bowtie} \sim [d]_{\bowtie}$ in $G_{\bowtie}^*$ and every path in $G$ can be seen as a path in $G_{\bowtie}^*$. 

    Suppose by contradiction that $G_{\bowtie}^*$ has a cycle, $([c_0]_{\bowtie},[c_1]_{\bowtie}, \cdots, [c_k]_{\bowtie}, [c_0]_{\bowtie})$ say. By construction of $G_{\bowtie}^*$, this is also a cycle in the quotient $G_{\bowtie}$. We claim that $k\geq 2$. Indeed, $k\neq 0$ since no vertex $[c]_{\bowtie}\in G_{\bowtie}^*$ admits a loop. Moreover, $k=1$ would imply the existence of two undirected edges between $[c_0]_{\bowtie}$ and $[c_1]_{\bowtie}$ in $G_{\bowtie}^*$, contradicting that $E_{G_{\bowtie}^*}$ is a set and not a multiset.
    
    Therefore, if $v_0,v_{k+1} \in [c_0]_{\bowtie}$ are distinct vertices of the tree $G$, then there are vertices $v_1,\cdots,v_k$ of $G$, $k\geq 2$, such that $v_i\in[c_i]_{\bowtie}$, $i\in\{1,\cdots,k\}$. Let $p = (v_0, v_1, \ldots, v_k, v_{k+1})$ be the path between $v_0$ and $v_{k+1}$ -- unique since $G$ is a tree. However, each vertex on the path between $v_0$ and $v_{k+1}$ belongs to a distinct $\bowtie$-class, which contradicts Proposition~\ref{prop:reflected}, since $k \geq 2$ and $p$ is not $\bowtie$-reflected.
\end{proof}

\begin{rem}
    It follows from Proposition~\ref{prop:G*} that $G_{\bowtie}$ does not have cycles other than the ones formed by a 2-cycle between two adjacent classes. $G_{\bowtie}$ can have a 1-cycle (looping), and a 2-cycle appears in $G_{\bowtie}$ whenever two elements of distinct $\bowtie$-classes are adjacent on $G$ since $G$ is undirected, as illustrated in Fig.~\ref{fig:tree_balanced}. Moreover, the previous result guarantees that the path between any two nodes of $G_{\bowtie}$ is unique. However, when looking at edges in the path, it may happen that the path is not unique, since there may be more than one directed edge from one vertex to another.
\end{rem}

\begin{cor}
    Let $G$ be a tree graph, $\bowtie$ a balanced relation on $G$ and $G_{\bowtie}$ the quotient network. If $l$ is a leaf of $G$, then $[l]_{\bowtie}$ is a leaf of $G_{\bowtie}^*$. 
\end{cor}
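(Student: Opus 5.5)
Since $G_{\bowtie}^*$ is a tree by Proposition~\ref{prop:G*}, it suffices to show that $[l]_{\bowtie}$ has exactly one neighbour in $G_{\bowtie}^*$. The exception is when $G_{\bowtie}^*$ has a single vertex; then $[l]_{\bowtie}$ is trivially a degenerate leaf, and we set this case aside. Recall that two classes are adjacent in $G_{\bowtie}^*$ exactly when some vertex of one is adjacent in $G$ to some vertex of the other, with the classes distinct. So we must show that all $G$-neighbours of all vertices of $[l]_{\bowtie}$ outside $[l]_{\bowtie}$ lie in a single $\bowtie$-class.

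First, every vertex of $[l]_{\bowtie}$ is a leaf of $G$. The balanced condition gives an input isomorphism between $I(l)$ and $I(l')$ for $l\bowtie l'$, so $\deg(l')=\deg(l)=1$. This is also used in the proof of Proposition~\ref{prop:samecolorleaf}. Hence each $l'\in[l]_{\bowtie}$ has a unique neighbour $u_{l'}$. Let $u$ be the neighbour of $l$. The balanced condition applied to $l\bowtie l'$ forces $u\bowtie u_{l'}$, since the unique input of $l$ must be matched to the unique input of $l'$. Therefore all neighbours of vertices of $[l]_{\bowtie}$ lie in $[u]_{\bowtie}$.

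It remains to check that $[u]_{\bowtie}\neq[l]_{\bowtie}$; otherwise the only incident edges would be loops, which do not count in $G_{\bowtie}^*$. If $u\bowtie l$, then $u$ would be a leaf, so $G$ would consist only of the edge $\{l,u\}$ and $G_{\bowtie}^*$ would be a single vertex, which is the degenerate case above. Otherwise $[l]_{\bowtie}$ has exactly the one neighbour $[u]_{\bowtie}$ in $G_{\bowtie}^*$, and so it is a leaf.

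There is no real obstacle. The only care needed is the bookkeeping that adjacency in $G_{\bowtie}^*$ arises exactly from $G$-edges between distinct classes, which was established in the proof of Proposition~\ref{prop:G*}, together with the degenerate small cases.
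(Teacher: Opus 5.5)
Your argument is correct and is essentially the paper's: the paper notes only that input degree is preserved under quotienting, so $[l]_{\bowtie}$ receives a single arrow (from the class of the neighbour of $l$) in $G_{\bowtie}$ and therefore has degree one in $G_{\bowtie}^*$, and you unpack the same fact at the level of the vertices of $G$. Your explicit check that $[u]_{\bowtie}\neq[l]_{\bowtie}$ is more careful than the paper. In the excluded case, $G=K_2$ with both vertices identified, $[l]_{\bowtie}$ is an isolated vertex of $G_{\bowtie}^*$, so that case is a genuine exception to the statement rather than a ``degenerate leaf''.
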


\begin{proof}
    This follows directly from the fact that the degree of a vertex is preserved in the quotienting operation, so if $l$ is a leaf of $G$ then $[l]_{\bowtie}$ has degree one in $G_{\bowtie}$ and, consequently, it has degree one on $G_{\bowtie}^*$. 
\end{proof}

%%%%%%%%%%%%%%%% SECTION 4 %%%%%%%%%%%%%%%%%%%%%%%%%%%%%%%%%%

\section{Admissible Vector Fields on Trees}
\label{sec:4}

Tree structure imposes restrictions on admissible vector fields that contribute to the study of stability of equilibrium points or synchrony subspaces. In Subsection~\ref{subsec:4.1}, we discuss linear stability of admissible vector fields and in Subsection~\ref{subsec:4.2} we explore the importance of cherries -- groups of pendant vertices adjacent to a common neighbor -- in this characterization. Motivated by the relevance of this structure, we further focus, in Subsection~\ref{subsec:4.3}, on the study of how they dictate admissible maps and the Lyapunov stability of the synchrony subspaces induced by them.

\subsection{Linear Stability}
\label{subsec:4.1}

Throughout this subsection, we assume that the phase space associated with each cell is one-dimensional. In an undirected network with a single arrow type, admissibility implies that the vector field component associated with a cell depends only on its degree. Hence, cells with equal degree have identical vector field components. Under these conditions, if $f:P\to P$ is an admissible vector field with an equilibrium at the origin, then for every cell $c$, the $c$ component of the linearization $d_0f$ at the origin is
\begin{equation}
\label{eq:linearization}
(d_0f)_c(x) = \alpha_{\deg(c)} x_c + \beta_{\deg(c)}\sum_{v\sim c}x_v. 
\end{equation}
For each degree \(i\), the corresponding cells are associated with a pair \((\alpha_i,\beta_i)\) of coefficients governing the respective components of \(d_0 f\). As we work with networks of 1-cell type, we assume for consistency that all internal dynamics coefficients coincide, and we denote this common coefficient by $\alpha$. In what follows, we discuss the occurrence of $\alpha$ as an eigenvalue of $d_0 f$. We start with an example.

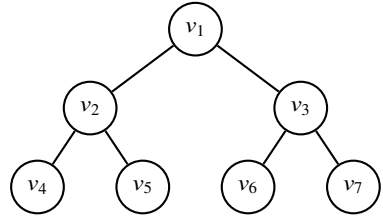
\begin{figure}[htbp]
    \centering
    \begin{tikzpicture}
    \node at (0,0) {\begin{tikzpicture}[node distance={2cm}, thick, main/.style = {draw, circle, minimum size=0.7cm}, scale=0.7]
            
            \node[main] (1) at (0, 0) {$v_1$};
            \node[main] (2) at (-2, -1.5) {$v_2$};
            \node[main] (3) at (2, -1.5) {$v_3$};
            \node[main] (4) at (-3, -3) {$v_4$};
            \node[main] (5) at (-1, -3) {$v_5$};
            \node[main] (6) at (1, -3) {$v_6$};
            \node[main] (7) at (3, -3) {$v_7$};

            \draw (1) -- (2);
            \draw (1) -- (3);
            \draw (2) -- (4);
            \draw (2) -- (5);
            \draw (3) -- (6);
            \draw (3) -- (7);
     
        \end{tikzpicture}};
    \end{tikzpicture}
    \caption{The binary tree of order seven.}
    \label{fig:binary}
\end{figure}

\begin{ex}
\label{ex:binary_tree}
Consider the binary tree graph of order seven given in  Fig.~\ref{fig:binary}. The matricial form of the linearization  $d_0f$   for this network is
$$[d_0f]=\begin{pmatrix}
    \alpha & \beta_2 & \beta_2 & 0 & 0 & 0 & 0\\
    \beta_3 & \alpha & 0 & \beta_3 & \beta_3 & 0 & 0\\
    \beta_3 & 0 & \alpha & 0 & 0 & \beta_3 & \beta_3\\
    0 & \beta_1 & 0 & \alpha & 0 & 0 & 0\\
    0 & \beta_1 & 0 & 0 & \alpha & 0 & 0\\
    0 & 0 & \beta_1 & 0 & 0 & \alpha & 0\\
    0 & 0 & \beta_1 & 0 & 0 & 0 & \alpha
\end{pmatrix}.$$

The coefficient $\alpha$ is an eigenvalue of $d_0f$ of multiplicity at least two, since $u_1 = (0,0,0,1,-1,0,0)$ and $u_2=(0,0,0,0,0,1,-1)$ are eigenvectors associated with  $\alpha$. 
Now, for $a_1 = (0,1,-1,0,0,0,0)$ and $a_2 = (0,0,0,1,1,-1,-1)$, we have
    \begin{align*}
        d_0f(a_1) &= (0,\alpha,-\alpha,\beta_1,\beta_1,-\beta_1,-\beta_1) = \alpha a_1 + \beta_1 a_2\\
        d_0f(a_2) &= (0,2\beta_3,-2\beta_3,\alpha,\alpha,-\alpha,-\alpha) = 2\beta_3 a_1 + \alpha a_2,
    \end{align*}
and so $\sp\{a_1,a_2\}$ is an invariant subspace under $d_0f$. Hence, the eigenvalues of
$$\begin{pmatrix}
    \alpha & \beta_1\\
    2\beta_3 & \alpha
\end{pmatrix}$$
are the eigenvalues of $d_0f$, which are
    $$\lambda_1 = \alpha + \sqrt{2\beta_1\beta_3}, \qquad \lambda_2 = \alpha - \sqrt{2\beta_1\beta_3}. $$
For $b_1 = (1,0,0,0,0,0,0)$, $b_2 = (0,1,1,0,0,0,0)$ and $b_3 = (0,0,0,1,1,1,1)$, we have
    \begin{align*}
        d_0f(b_1) &= (\alpha,\beta_3,\beta_3,0,0,0,0) = \alpha b_1 + \beta_3 b_2\\
        d_0f(b_2) &= (2\beta_2,\alpha,\alpha,\beta_1,\beta_1,\beta_1,\beta_1) = 2\beta_2 b_1 + \alpha b_2 + \beta_1 b_3\\
        d_0f(b_3) &= (0,2\beta_3,2\beta_3,\alpha,\alpha,\alpha,\alpha) = 2\beta_3b_2 + \alpha b_3.
    \end{align*}
Since $u_1,u_2,a_1,a_2\notin \sp\{b_1,b_2,b_3\}$, the eigenvalues of 
    $$\begin{pmatrix}
        \alpha & \beta_3 & 0\\
        2\beta_2 & \alpha & \beta_1\\
        0 & 2\beta_3 & \alpha
    \end{pmatrix}$$
are the remaining eigenvalues of $d_0f$, which are
    $$\lambda_3 = \alpha, 
    \qquad \lambda_4 = \alpha + \sqrt{2\beta_3(\beta_1+\beta_2)}, \qquad \lambda_5 = \alpha -\sqrt{2\beta_3(\beta_1+\beta_2)}. $$
 %   This process exhausts all the eigenvalues of $d_0f$ and we 
 % have constructed $\spec(d_0f)$. 
\end{ex}

Let us now expand from this example using the notion of a matching of a graph. For a general tree network of order $n$ with cells $v_1,\cdots,v_n$, we have that 
\begin{equation} \label{eq: d_0f}
d_0f = DA + \alpha Id,
\end{equation}
where $D = \operatorname{diag}(\beta_{\deg(v_1)}, \cdots, \beta_{\deg(v_n)})$. 
In graph theory, a \textit{matching} $M$ of a graph is a set of edges such that no two edges of $M$ share common vertices (see, for example, \cite[Subsection 3.5]{Godsil}). Also, $M$ is a \textit{perfect matching} if every vertex of the graph is an endpoint of some edge in $M$, and $M$ is a \textit{maximum matching} if it is a matching with the largest number of edges, $\upnu(G)$ say. A classical spectral theorem states that if $G$ is tree graph of order $n$ then $0$ is an eigenvalue of the adjacent matrix $A$ with multiplicity $n-2\upnu (G)$ (see, for example, \cite{G}). This result is used to present a lower bound for the algebraic multiplicity of the eigenvalue $\alpha$:

\begin{prop}
\label{prop:alpha}
    Let $f$ be an admissible vector field on a tree network $G$ of order $n$ with an equilibrium point at the origin. Then the coefficient $\alpha$ of the internal cell dynamics is an eigenvalue of $d_0f$ with multiplicity at least $n-2\upnu(G)$.
\end{prop}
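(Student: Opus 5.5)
We use the decomposition (\ref{eq: d_0f}), $d_0f = DA + \alpha Id$ with $D=\operatorname{diag}(\beta_{\deg(v_1)},\dots,\beta_{\deg(v_n)})$. The eigenvalues of $d_0f$ are those of $DA$ shifted by $\alpha$, with the same algebraic multiplicities. So it suffices to show that $0$ is an eigenvalue of $DA$ with algebraic multiplicity at least $n-2\upnu(G)$. By the classical theorem quoted above, $\dim\ker A = n-2\upnu(G)$, since $A$ is symmetric and geometric and algebraic multiplicities agree.

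We would argue through the characteristic polynomial rather than eigenvectors, because $D$ may be singular or non-generic. Write the characteristic polynomial of $DA$ as
$$\det(\lambda Id - DA)=\sum_{k=0}^{n}(-1)^k e_k(DA)\,\lambda^{n-k},$$
where $e_k(DA)$ is the sum of the principal $k\times k$ minors of $DA$. A principal minor of $DA$ indexed by a set $S$ factors as $\det(D_S)\det(A_S)$, because $D$ is diagonal, and $A_S$ is the adjacency matrix of the induced subforest $G[S]$. The determinant of the adjacency matrix of a forest is nonzero only if the forest has a perfect matching; in the permutation expansion the only surviving terms come from involutions whose transpositions are edges, since a forest has no cycles of length $\ge 3$. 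Hence $\det(A_S)=0$ unless $G[S]$ has a perfect matching, which forces $|S|\le 2\upnu(G)$. Therefore $e_k(DA)=0$ for every $k>2\upnu(G)$, and $\lambda^{n-2\upnu(G)}$ divides the characteristic polynomial of $DA$. This gives algebraic multiplicity at least $n-2\upnu(G)$ for the eigenvalue $0$ of $DA$, hence for $\alpha$ as an eigenvalue of $d_0f$.

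The main obstacle is the combinatorial step that the adjacency determinant of a forest vanishes when the forest has no perfect matching. We would justify it directly from the Leibniz expansion. A permutation $\sigma$ contributes $\prod a_{i\sigma(i)}\neq 0$ only if each cycle of $\sigma$ follows edges of $G[S]$. Since $a_{ii}=0$, there are no fixed points. A cycle of length $\ge 3$ would be a closed walk through distinct vertices, that is, a cycle in $G$, which trees do not have. So $\sigma$ is a product of disjoint edge-transpositions covering $S$, which is a perfect matching of $G[S]$. A perfect matching of $G[S]$ is a matching of $G$, so $|S|\le 2\upnu(G)$. An alternative route is a rank argument, $\operatorname{rank}(DA)\le \operatorname{rank}(A)=2\upnu(G)$, but that only bounds the geometric multiplicity, which is at most the algebraic one, so it already suffices. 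We would mention it as the quickest check, and the minor computation shows the bound holds algebraically in a transparent way.
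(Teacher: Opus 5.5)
Your proof is correct, but its main line differs from the paper's. The paper's proof is the argument you mention at the end as a ``quick check''. Since $\ker A\subseteq\ker(DA)$, the geometric multiplicity of $0$ for $DA$ is at least $\dim\ker A=n-2\upnu(G)$ by the cited spectral theorem for trees, and algebraic multiplicity dominates geometric multiplicity. Your stated reason for avoiding eigenvectors is therefore not a real obstacle: the kernel inclusion holds for every diagonal $D$, singular or not.

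Your principal-minor route uses only that $D$ is diagonal and that the Leibniz expansion of $\det(A_S)$ for a forest is supported on perfect matchings of $G[S]$. It is self-contained, since it reproves the needed inequality instead of quoting the external theorem. It also gives more. Each surviving permutation is a product of $|S|/2$ transpositions, so $\det(A_S)$ equals $(-1)^{|S|/2}$ times the number of perfect matchings of $G[S]$. Hence
\[
\det(\lambda Id-DA)=\sum_{j\ge 0}(-1)^j\,\mu_j\,\lambda^{n-2j},\qquad \mu_j=\sum_{M\text{ matching},\ |M|=j}\ \prod_{v\in V(M)}\beta_{\deg(v)},
\]
where $V(M)$ is the set of vertices covered by $M$. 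This is a weighted matching polynomial in which only the powers $\lambda^{n-2j}$ occur. It immediately gives the paper's next two results: the symmetry $\lambda\mapsto-\lambda$ of $\spec(DA)$, now with algebraic multiplicities, and the fact that the multiplicity of $\alpha$ can only change by even integers. It also shows the bound in the proposition is attained exactly when $\mu_{\upnu(G)}\neq 0$. In Example~\ref{ex:binary_tree} one gets $\mu_2=4\beta_1\beta_3^2(\beta_1+\beta_2)$, which agrees with the eigenvalues computed there.

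The paper's one-line argument is shorter but yields only the lower bound, and it rests entirely on the cited theorem.
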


\begin{proof}
    Since $\ker(A) \subseteq \ker(DA)$, if $0$ is an eigenvalue of $A$ then $\alpha$ is an eigenvalue of $d_0f$, and the result follows by the spectral theorem for trees.
\end{proof}

If $\beta_{\deg(c)} \neq 0$ for every cell $c$, then $\ker(A)=\ker(DA)$. Moreover, if a graph $G$ of order $n$ admits a perfect matching, then $n=2\upnu(G)$. This yields a sufficient condition for the occurrence of $\alpha$ based on the topology of the tree:

\begin{cor}
    Under the conditions of Proposition~\ref{prop:alpha}, assume $\beta_{\deg(c)}$ nonzero for all cell $c$. If $G$ admits a perfect matching, then $\alpha$ is not an eigenvalue of $d_0f$.
\end{cor}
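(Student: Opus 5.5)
The corollary follows from Proposition~\ref{prop:alpha} together with the remark preceding it. The key is to show that $\alpha$ is an eigenvalue of $d_0f$ exactly when $DA$ has a nontrivial kernel. This holds because, by (\ref{eq: d_0f}), $d_0f-\alpha Id = DA$.

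The plan has three steps.

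\emph{Step 1.} Since $\beta_{\deg(c)}\neq 0$ for every cell $c$, the diagonal matrix $D=\operatorname{diag}(\beta_{\deg(v_1)},\cdots,\beta_{\deg(v_n)})$ is invertible. Hence $\ker(DA)=\ker(A)$: if $DAx=0$, multiplying by $D^{-1}$ gives $Ax=0$, and the converse inclusion is immediate.

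\emph{Step 2.} A perfect matching covers all $n$ vertices with pairwise disjoint edges. Each edge covers two vertices, so $n=2\upnu(G)$. The classical spectral theorem for trees quoted before Proposition~\ref{prop:alpha} states that $0$ is an eigenvalue of $A$ with multiplicity $n-2\upnu(G)=0$. So $A$ is nonsingular and $\ker(A)=\{0\}$.

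\emph{Step 3.} Combining the two steps, $\ker(d_0f-\alpha Id)=\ker(DA)=\ker(A)=\{0\}$. Therefore $\alpha$ is not an eigenvalue of $d_0f$.

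There is no real obstacle here. The only point needing care is that the spectral theorem is stated for multiplicity, whereas we need nonsingularity. Since $A$ is symmetric, algebraic and geometric multiplicities agree, so multiplicity zero means $\ker(A)=\{0\}$. Alternatively, one can argue directly that $\det A=\pm1$ for a tree with a perfect matching, since that matching is the unique one. Note also that Proposition~\ref{prop:alpha} alone gives only a lower bound on the multiplicity and is not sufficient. The argument must go through the kernel equality $\ker(DA)=\ker(A)$, which uses the hypothesis $\beta_{\deg(c)}\neq0$ in an essential way.
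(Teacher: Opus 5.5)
Your proof is correct and takes the same route as the paper, which justifies the corollary in the remarks just before it. Those remarks note that $\ker(A)=\ker(DA)$ when every $\beta_{\deg(c)}\neq 0$, and that a perfect matching forces $n=2\upnu(G)$, so the spectral theorem for trees gives $\ker(A)=\{0\}$ and hence $\alpha\notin\spec(d_0f)$; your added care about algebraic versus geometric multiplicity for the symmetric matrix $A$, and the alternative $\det A=\pm1$ argument via uniqueness of the perfect matching in a tree, are both correct.
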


Let us further discuss the occurrence and multiplicity  of  $\alpha$ as a distinguished eigenvalue  of $d_0f$. 
From the equality $\spec(d_0f) =\{ \alpha + \lambda: \lambda\in \spec(DA)\}$, clearly  the algebraic multiplicity of  $\alpha$ is related to the algebraic multiplicity of $0$ as an eigenvalue of $DA$. Revisiting Example~\ref{ex:binary_tree}, on crossing the condition  $\beta_1 = -\beta_2$ to the generic condition $\beta_1 \neq -\beta_2$ the algebraic multiplicity of $\alpha$ decreases by two units. A broader question concerns whether, and in what manner, the algebraic multiplicity of $\alpha$ varies in a tree network as the coupling parameters change. The next two results address this issue.

\begin{prop}
    If $\lambda$ is an eigenvalue of $DA$ then so is $-\lambda$. 
\end{prop}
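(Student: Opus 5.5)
Since a tree is bipartite, we split $V = V_1 \sqcup V_2$ into the two color classes of a proper 2-coloring. Every edge joins $V_1$ to $V_2$. Let $S = \operatorname{diag}(s_1,\dots,s_n)$ with $s_i = 1$ if $v_i\in V_1$ and $s_i=-1$ if $v_i \in V_2$. Then $S = S^{-1}$. We show that $S(DA)S^{-1} = -DA$. From this the claim follows immediately: $DA$ and $-DA$ are similar, so they have the same characteristic polynomial. Hence $\lambda\in\spec(DA)$ if and only if $-\lambda \in \spec(DA)$, with equal algebraic multiplicities. At the level of vectors, if $DAx = \lambda x$ then $DA(Sx) = -\lambda Sx$.

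The key computation is entrywise. The $(i,j)$ entry of $DA$ is $\beta_{\deg(v_i)} a_{ij}$, so the $(i,j)$ entry of $S\,DA\,S$ is $s_i s_j \beta_{\deg(v_i)} a_{ij}$. Whenever $a_{ij}\neq 0$, the vertices $v_i$ and $v_j$ are adjacent. They therefore lie in opposite classes, so $s_is_j=-1$. When $a_{ij}=0$ both entries vanish. Hence $S\,DA\,S = -DA$. Here we use that $D$ is diagonal, so it commutes with $S$; thus $S D A S = D (SAS)$ and $SAS=-A$.

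There is no serious obstacle. The only point to state explicitly is the existence of the bipartition. To obtain it, fix a root $r$ and let $V_1=\{v:\dist(r,v)\text{ even}\}$. Uniqueness of paths in trees (\cite[Theorem 1.5.1]{Distel}) shows that adjacent vertices have distances to $r$ differing by exactly one.

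We would also remark that no hypothesis on the $\beta_i$ is needed. Moreover, the symmetry respects multiplicities, which is what will be used afterwards when tracking the multiplicity of $\alpha$: by \eqref{eq: d_0f}, the spectrum of $d_0f$ is symmetric about $\alpha$.
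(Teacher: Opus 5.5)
Your proof is correct and is essentially the paper's argument. The paper orders the vertices by the bipartition, writes $DA$ in off-diagonal block form with blocks $D_1B$ and $D_2B^T$, and sends an eigenvector $(X,Y)$ for $\lambda$ to $(X,-Y)$ for $-\lambda$, which is exactly your $Sx$. Your phrasing as the similarity $S(DA)S^{-1}=-DA$ additionally gives equal algebraic multiplicities outright, and that is what the subsequent corollary on even changes in the multiplicity of $\alpha$ actually uses.
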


\begin{proof}
As $G$ is bipartite, we can relabel the vertices if necessary so we can write 
    $$DA = \begin{pmatrix}
        D_1 & 0\\
        0 & D_2
    \end{pmatrix}\begin{pmatrix}
        0 & B\\
        B^T & 0
    \end{pmatrix} = \begin{pmatrix}
        0 & D_1B\\
        D_2B^T & 0
    \end{pmatrix}.$$

    If $u = (X, Y)$ is an eigenvector of $DA$ associated with the eigenvalue $\lambda$ then $D_1BY = \lambda X$ e $D_2B^TX = \lambda Y$. Hence,
    $$DA \begin{pmatrix}
        X\\
        -Y
    \end{pmatrix} = \begin{pmatrix}
        -D_1BY\\
        D_2B^TX\end{pmatrix} = \begin{pmatrix}
        -\lambda X\\
        \lambda Y
        \end{pmatrix} =-\lambda \begin{pmatrix}
            X\\
            -Y
        \end{pmatrix}.
        $$
        \mbox{}
\end{proof}

We then have:

\begin{cor}
For any variation of the coupling parameters \(\beta_{\deg(v_i)}\), \(i = 1, \ldots, n\), if the algebraic multiplicity of the eigenvalue \(\alpha\) of \(d_0 f\) changes, then it changes by an even integer.
\end{cor}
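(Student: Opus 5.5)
The statement concerns the algebraic multiplicity of $\alpha$ as an eigenvalue of $d_0f = DA + \alpha\,Id$. By (\ref{eq: d_0f}), this multiplicity equals the algebraic multiplicity of $0$ as an eigenvalue of $DA$. Write $p(t) = \det(tId - DA)$ for the characteristic polynomial of $DA$; its coefficients are polynomials in the coupling parameters $\beta_{\deg(v_i)}$. The plan is to show that $p$ is always of the form $p(t) = t^{\varepsilon} q(t^2)$, where $\varepsilon\in\{0,1\}$ is the parity of $n$. Then the multiplicity of $0$ is $\varepsilon + 2\,\mathrm{ord}_0(q)$. This has fixed parity $n \bmod 2$ for every choice of parameters, so any change is even.

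\textbf{Step 1: parity of $p$.} Use the bipartite block form from the preceding proposition, $DA = \begin{pmatrix} 0 & D_1B\\ D_2B^T & 0\end{pmatrix}$. Let $S=\operatorname{diag}(I,-I)$ with the same block sizes. Then $S(DA)S^{-1} = -DA$, so $DA$ is similar to $-DA$ and $p(t) = \det(tId + DA) = (-1)^n p(-t)$. This is the polynomial-level version of the preceding proposition. I use it because that proposition, as stated for eigenvalues, gives no information about algebraic multiplicities.

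\textbf{Step 2: conclude.} If $n$ is even, $p$ is an even polynomial, so all its monomials have even degree and the order of vanishing at $0$ is even. If $n$ is odd, $p$ is odd, so $p(t)=t\,q(t^2)$ and the order of vanishing at $0$ is odd. In both cases the multiplicity of $\alpha$ in $\spec(d_0f)$ is congruent to $n$ modulo $2$, independently of the $\beta$'s. Hence if it changes under a variation of the parameters, the difference between the two values is even.

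The only real subtlety is that the similarity in Step 1 must act at the level of the characteristic polynomial, which is what gives the statement about algebraic rather than geometric multiplicity. This is handled by the conjugation by $S$, which I would verify with a one-line block computation. As a consistency check, in Example~\ref{ex:binary_tree} we have $n=7$ and the multiplicity of $\alpha$ is odd: it is $3$ generically and rises by $2$ when $\beta_1=-\beta_2$.
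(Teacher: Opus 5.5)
Your proof is correct and uses the same mechanism as the paper: the bipartite sign flip $(X,Y)\mapsto(X,-Y)$ from the preceding proposition, from which the paper deduces the corollary without further argument. Your upgrade of that eigenvector statement to the similarity $S(DA)S^{-1}=-DA$, hence $p(t)=(-1)^n p(-t)$, is what actually controls algebraic rather than geometric multiplicities, and it gives the sharper invariant that the multiplicity of $\alpha$ is always congruent to $n$ modulo $2$.
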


We finish by presenting an estimate for the elements of $\spec(d_0f)$, which depends on the signs of the coupling parameters  $\beta_{\deg(c)}$.  For that,  given a matrix $B$ let us denote  $\lambda_{\min}(B)$ and $\lambda_{\max}(B)$ the smallest and the largest eigenvalues of $B$, respectively.

\begin{theorem}
    Consider the notation described above.  If $\beta_{\deg(c)} > 0$ for all cell $c$, then the elements of $\spec(d_0f)$ are real and, for all $\lambda\in\spec(d_0f)$, we have
    \begin{equation}
    \label{eq:beta_real}
        \alpha+\lambda_{\min}(D^{\frac{1}{2}} A D^{\frac{1}{2}})\leq\lambda\leq \alpha + \lambda_{\max}(D^{\frac{1}{2}} A D^{\frac{1}{2}}).
    \end{equation}
    If $\beta_{\deg(c)} \leq 0$ for some cell $c$, then for all $\lambda\in\spec(d_0f)$ we have
    \begin{equation}
        \label{eq:beta_neg}
        \alpha+\lambda_{\min}\Big(\frac{1}{2}(DA+AD)\Big)\leq\operatorname{Re}(\lambda)\leq \alpha + \lambda_{\max}\Big(\frac{1}{2}(DA+AD)\Big).
    \end{equation}
\end{theorem}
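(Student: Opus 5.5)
Since $d_0f = \alpha\,Id + DA$ by (\ref{eq: d_0f}), we have $\spec(d_0f)=\{\alpha+\mu:\mu\in\spec(DA)\}$. Both parts of the statement therefore reduce to locating the spectrum of $DA$, where $D=\operatorname{diag}(\beta_{\deg(v_1)},\dots,\beta_{\deg(v_n)})$ and $A$ is the symmetric adjacency matrix of the tree.

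For the first part, assume every $\beta_{\deg(c)}>0$. Then $D$ is positive definite with a real positive square root $D^{1/2}$, and we have the similarity
$$D^{-\frac12}(DA)D^{\frac12}=D^{\frac12}AD^{\frac12}.$$
The right-hand side is real symmetric, so $DA$ has exactly the eigenvalues of $D^{1/2}AD^{1/2}$, all real. These lie in $[\lambda_{\min}(D^{1/2}AD^{1/2}),\lambda_{\max}(D^{1/2}AD^{1/2})]$, and translating by $\alpha$ gives (\ref{eq:beta_real}). This part is routine.

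For the second part, no similarity to a symmetric matrix is available in general, so I would use the standard Bendixson-type bound through the Rayleigh quotient. Let $\mu\in\spec(DA)$ with a unit eigenvector $u\in\mathbb{C}^n$, so that $u^*DAu=\mu$. Since $D$ and $A$ are real symmetric, $(DA)^*=AD$. Hence
$$\operatorname{Re}\mu=\tfrac12\bigl(u^*DAu+\overline{u^*DAu}\bigr)=u^*\,\tfrac12(DA+AD)\,u.$$
The matrix $H=\tfrac12(DA+AD)$ is real symmetric, hence Hermitian, so its Rayleigh quotient over unit complex vectors lies in $[\lambda_{\min}(H),\lambda_{\max}(H)]$. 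Adding $\alpha$ gives (\ref{eq:beta_neg}).

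The only point needing care is the identity $\overline{u^*Mu}=u^*M^*u$ for complex $u$ together with the Rayleigh bounds for Hermitian matrices over $\mathbb{C}^n$; both are standard. The bound (\ref{eq:beta_neg}) in fact holds for any signs of the $\beta$'s; the case split in the statement only matters because the sharper real statement requires $D>0$. The tree hypothesis is not really used beyond the symmetry of $A$.
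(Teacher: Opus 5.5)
Your proof is correct and follows the same route as the paper. For the positive case you both use the similarity $DA = D^{1/2}(D^{1/2}AD^{1/2})D^{-1/2}$, and for the general case you both bound $\operatorname{Re}\lambda$ by the extreme eigenvalues of the Hermitian part $\frac12(DA+AD)+\alpha Id$; the only difference is that you prove this Bendixson-type bound directly via the Rayleigh quotient, where the paper cites the spectral property of the Hermitian part together with Weyl's inequality for the shift by $\alpha$.
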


\begin{proof}
    Assume $\beta_{\deg(c)} > 0$ for all cell $c$. Considering the diagonal matrix
    $$D^{\frac{1}{2}} = \operatorname{diag}\left(\sqrt{\beta_{\deg(v_1)}}, \cdots,\sqrt{\beta_{\deg(v_n)}}\right),$$
    and the similarity $DA = D^{\frac{1}{2}}(D^{\frac{1}{2}}AD^{\frac{1}{2}})D^{-\frac{1}{2}}$, we have
    $$\spec(d_0f) = \spec(D^{\frac{1}{2}}(D^{\frac{1}{2}} A D^{\frac{1}{2}} + \alpha Id)D^{-\frac{1}{2}}) \subset \R. $$
    As $d_0f$ is similar to a Hermitian matrix, we have that (\ref{eq:beta_real}) follows by Weyl’s eigenvalue inequality.
    
    Assume now that $\beta_{\deg(c)}\leq 0$ for some cell $c$. It is immediate from (\ref{eq: d_0f}) that the  Hermitian  and skew-Hermitian parts of $d_0f$ turn out to be, respectively, 
    $$H = \frac{1}{2}(DA + AD) + \alpha Id, \hspace{1cm} K = \frac{1}{2}(DA - AD).$$
    From the spectral property,  for all $\lambda\in \spec(d_0f)$, $\operatorname{Re}(\lambda)\in[\lambda_{\min}(H),\lambda_{\max}(H)]$. Applying Weyl’s eigenvalue inequality again, for all $\sigma\in\spec(H)$, 
    $$\alpha+\lambda_{\min}\Big(\frac{1}{2}(DA+AD)\Big)\leq\sigma\leq \alpha + \lambda_{\max}\Big(\frac{1}{2}(DA+AD)\Big).$$
    and (\ref{eq:beta_neg}) follows.
\end{proof}

\subsection{Effects of Cherry Structures}
\label{subsec:4.2}

A cherry in a tree is a pair of leaves adjacent to a common neighbor. This configuration is a typical structural feature of trees: Erd\H{o}s and R\'enyi~\cite{ER} showed that the probability that a random tree contains at least one cherry tends to~1 as the order of the tree increases. Let us also introduce a more general notion of a cherry:

\begin{defi}
    Let $m \geq 2$ be an integer. An \textit{$m$-cherry} $C$ on a tree graph $G$ is a collection of $m$ leaves adjacent to a common vertex $w$, the \textit{center} of $C$. We say that $C_1, \ldots, C_k$ form a \textit{pack of cherries} if each $C_i$ is an $m_i$-cherry on $G$ and the sets $C_i$ are pairwise disjoint.

\end{defi}

The above definition allows a pack of cherries to contain cherries sharing the same center. The only requirement is that no leaf belongs to more than one cherry simultaneously.

Given a pack of cherries $C_1, \ldots, C_k$ in a tree graph $G$, consider the permutation $\phi_{C_1,\ldots,C_k}$ whose nontrivial cycles correspond to the sets $C_i$, for $i \in \{1,\ldots,k\}$. Then $\phi_{C_1,\ldots,C_k}$ is an automorphism of $G$  and so it induces the synchrony subspace
$$
\Delta_{C_1,\ldots,C_k} = \Fix\!\left(\langle \phi_{C_1,\ldots,C_k} \rangle\right),
$$
namely, the polydiagonal subspace associated with the pack of cherries. We denote by $W_{C_1,\ldots,C_k}$ the 
%complementary transversal 
orthogonal subspace to $\Delta_{C_1,\ldots,C_k}$.

If \(f\) is an admissible vector field on the binary tree of order seven (Fig.~\ref{fig:binary}) with an equilibrium at the origin, Example~\ref{ex:binary_tree} shows that a 2-cherry gives rise to \(\alpha\) as an eigenvalue of \(d_0 f\). This follows from the fact that the vertices \(v_4\) and \(v_5\), for instance, are both adjacent to \(v_2\), and any maximal matching of the graph can saturate at most one of these vertices, since a matching cannot contain two edges incident to the same vertex. As we show below, this phenomenon extends to a 2-cherry (Proposition~\ref{prop:eigenvalue}) and, more generally, to an \(m\)-cherry (Corollary~\ref{cor:m-cherry}) for arbitrary tree networks.

\begin{prop}
\label{prop:eigenvalue}
    Let $G$ be a tree network with a $2$-cherry $C =\{l_1,l_2\}$ and let $f$ be an admissible vector field on $G$. If $w$ is the center of $C$  and for  $\omega \in \Delta_{C}$, then the orthogonal subspace $W_{\{l_1,l_2\}}$ is invariant by $d_\omega f$ and $\partial_1h(\omega_{l_1},\omega_w)$ is an eigenvalue of $d_{\omega} f$ in the $W_{\{l_1,l_2\}}$-direction.
\end{prop}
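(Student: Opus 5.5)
We set up notation first. Since $l_1,l_2$ are leaves with the single neighbour $w$, admissibility forces a common governing function $h$ for all degree-one cells. Thus
$$f_{l_1}(x)=h(x_{l_1},x_w),\qquad f_{l_2}(x)=h(x_{l_2},x_w).$$
The notation $\partial_1 h$ means the derivative of $h$ in its first (internal) variable. We do not assume one-dimensional cell phase spaces here: $\partial_1h(\omega_{l_1},\omega_w)$ is a linear map on $P_{l_1}=P_{l_2}$, and its eigenvalues become eigenvalues of $d_\omega f$. Since $\Delta_C=\{x: x_{l_1}=x_{l_2}\}$, its orthogonal complement is
$$W_{\{l_1,l_2\}}=\{u\in P: u_v=0 \text{ for } v\neq l_1,l_2,\ u_{l_2}=-u_{l_1}\}.$$

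\textbf{Step 1: compute $d_\omega f(u)$ for $u\in W_{\{l_1,l_2\}}$.} Write $u$ with $u_{l_1}=y$, $u_{l_2}=-y$, and all other components zero. We treat three kinds of cell.

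For the leaves, the $l_1$-component is $\partial_1h(\omega_{l_1},\omega_w)y+\partial_2h(\omega_{l_1},\omega_w)\cdot 0$. The $l_2$-component is $-\partial_1h(\omega_{l_2},\omega_w)y$. Because $\omega\in\Delta_C$ gives $\omega_{l_2}=\omega_{l_1}$, this equals $-\partial_1h(\omega_{l_1},\omega_w)y$.

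For a cell $v\notin\{w,l_1,l_2\}$, we use that $G$ is a tree, so the only neighbour of $l_1$ and of $l_2$ is $w$. Hence $f_v$ depends on neither $x_{l_1}$ nor $x_{l_2}$, and $(d_\omega f(u))_v=0$.

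The centre $w$ needs the main argument. Here $f_w(x)=g(x_w,x_{v_1},\dots,x_{v_k})$, where $g$ is the governing function for degree $k=\deg(w)$. Admissibility on an undirected network with one edge type makes $g$ invariant under permutations of its coupling variables, since all input isomorphisms of $w$ to itself are allowed. Let $x_{l_1},x_{l_2}$ occupy slots $i,j$. Then $(d_\omega f(u))_w=\partial_i g(\omega)y-\partial_j g(\omega)y$. Swapping slots $i,j$ and using $\omega_{l_1}=\omega_{l_2}$, the symmetry of $g$ gives $\partial_ig(\omega)=\partial_jg(\omega)$, so the $w$-component vanishes.

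\textbf{Step 2: conclude.} By Step 1, $d_\omega f(u)$ again has the form $(\partial_1h\, y,-\partial_1h\, y)$ in the $l_1,l_2$ coordinates and zero elsewhere. So $d_\omega f(u)\in W_{\{l_1,l_2\}}$, which proves invariance. Identify $W_{\{l_1,l_2\}}\cong P_{l_1}$ via $u\mapsto y$. Under this identification, $d_\omega f|_{W}$ acts as $\partial_1h(\omega_{l_1},\omega_w)$. In the one-dimensional case this scalar is an eigenvalue, with eigenvector $(0,\dots,1,-1,\dots,0)$. In general, every eigenvalue of $\partial_1h(\omega_{l_1},\omega_w)$ is an eigenvalue of $d_\omega f$ restricted to the $W$-direction.

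As a consistency check, at $\omega=0$ with one-dimensional cells we have $\partial_1h=\alpha$. This recovers the vectors $u_1,u_2$ of Example~\ref{ex:binary_tree}.

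\textbf{Main obstacle.} The delicate point is the vanishing of the $w$-component. It depends on two facts: the symmetry of the governing function at $w$ under permutation of its inputs, which comes from admissibility with a single edge type; and the equality $\omega_{l_1}=\omega_{l_2}$, so that the two partial derivatives are evaluated at points related by that swap. I would state this symmetry explicitly as a consequence of the input-isomorphism requirement before carrying out the computation.
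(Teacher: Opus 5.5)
Your proof is correct and follows the same route as the paper: compute $d_\omega f$ on $W_{\{l_1,l_2\}}$, note that only the two leaf entries and the center entry can be nonzero, and show that the two contributions at $w$ cancel. You are somewhat more careful than the paper in two places. First, you justify the cancellation at $w$ explicitly (the governing function at $w$ is symmetric in its coupling variables, and $\omega_{l_1}=\omega_{l_2}$), which the paper uses silently when it writes $\partial f_w/\partial x_{l_2}(\omega)=\partial f_w/\partial x_{l_1}(\omega)$. Second, you work with general cell phase spaces rather than reducing to the one-dimensional case.
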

\begin{proof}
   We assume, without loss of generality, that each cell phase space is one-dimensional. Let $\omega\in\Delta_{\{l_1,l_2\}}$. If $\mathbf{e}_i$ denotes the canonical  vector (the $i$-th entry equal to one and all entries other than $i$ equal to zero), then $W_{\{l_1,l_2\}} = \langle \mathbf{e}_{l_1} - \mathbf{e}_{l_2}\rangle$ and, by the admissibility of $f$, for $d_{\omega}f= (a_{ij})$ and  $i = l_1, l_2$, it follows that $a_{ii} = \frac{\partial f_i}{\partial x_i}(\omega) =\partial_1h(\omega_{l_i},\omega_w)$ and $a_{wi} = \frac{\partial f_w}{\partial x_i}(\omega) = \frac{\partial f_w}{\partial x_{l_1}}(\omega)$ are the only possible non-zero entries of $d_{\omega}f$ in the column $i$. Therefore, $\mathbf{e}_{l_1}-\mathbf{e}_{l_2}$ is an eigenvector of $d_\omega f$ associated with the eigenvalue $\partial_1h(\omega_{l_1},\omega_w)$,  which yields the result.
\end{proof} 

This is now generalized as follows:
  
\begin{cor} \label{cor:m-cherry}
    Let $G$ be a tree network with  an $m$-cherry $C = \{l_1,\cdots,l_m\}$ and let $f$ be an admissible vector field on $G$. If   $w$ is the center of $C$ and for $\omega\in\Delta_C$, then 

    \begin{itemize}
    \setlength{\itemindent}{0.5cm}
        \item $W_C=\bigoplus_{i=2}^m W_{\{l_1,l_i\}}$, 
        \item $W_{\{l_1,l_i\}}$ is invariant under $d_\omega f$, for $i = 2,\cdots, m$, 
        \item  $\partial_1h(\omega_{l_1},\omega_w)$ is an eigenvalue of $d_\omega f$ with algebraic multiplicity at least $m-1$.
    \end{itemize}
\end{cor}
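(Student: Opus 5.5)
The plan is to reduce everything to the $2$-cherry case already settled in Proposition~\ref{prop:eigenvalue}, working as there with one-dimensional cell phase spaces and the canonical basis $\mathbf{e}_v$.

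The first item is linear algebra. The subspace $\Delta_C$ consists of the vectors with $x_{l_1}=\cdots=x_{l_m}$ and the remaining coordinates free. Hence its orthogonal complement is
$$W_C=\Big\{x : x_v=0 \text{ for } v\notin C,\ \sum_{i=1}^m x_{l_i}=0\Big\},$$
which has dimension $m-1$. Each $W_{\{l_1,l_i\}}=\langle \mathbf{e}_{l_1}-\mathbf{e}_{l_i}\rangle$ lies in $W_C$. The vectors $\mathbf{e}_{l_1}-\mathbf{e}_{l_i}$, $i=2,\dots,m$, are linearly independent, since each involves a distinct coordinate $\mathbf{e}_{l_i}$. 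So they span $W_C$, and the sum is direct.

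For the second item, we first observe that $\omega\in\Delta_C$ implies $\omega\in\Delta_{\{l_1,l_i\}}$ for every $i$. Indeed $\omega_{l_1}=\omega_{l_i}$, and all other coordinates are unrestricted in the $2$-cherry polydiagonal. Also, $\{l_1,l_i\}$ is itself a $2$-cherry with center $w$. We then apply Proposition~\ref{prop:eigenvalue} to this $2$-cherry. It gives that $W_{\{l_1,l_i\}}$ is invariant under $d_\omega f$ and that $\mathbf{e}_{l_1}-\mathbf{e}_{l_i}$ is an eigenvector with eigenvalue $\partial_1 h(\omega_{l_1},\omega_w)$.

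The one point needing care is that this eigenvalue does not depend on $i$. Proposition~\ref{prop:eigenvalue} gives it as $\partial_1h(\omega_{l_1},\omega_w)$, or equivalently $\partial_1h(\omega_{l_i},\omega_w)$. All leaves are input isomorphic, with the same governing function $h$ and the same neighbour $w$, and $\omega_{l_i}=\omega_{l_1}$. So the value is the same for every $i$.

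For the third item, $W_C$ therefore has a basis of $m-1$ eigenvectors of $d_\omega f$, all with the common eigenvalue $\lambda=\partial_1h(\omega_{l_1},\omega_w)$. Thus $d_\omega f$ restricted to $W_C$ is $\lambda\,\mathrm{Id}$, and the geometric multiplicity of $\lambda$ is at least $m-1$. Since algebraic multiplicity bounds geometric multiplicity from above, the algebraic multiplicity is also at least $m-1$.

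No step is a real obstacle. The only subtle points are checking that the eigenvalues coincide across $i$, which uses admissibility together with $\omega\in\Delta_C$, and noting that linear independence gives the full $(m-1)$-dimensional eigenspace.
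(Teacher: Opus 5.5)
Your proof is correct and follows essentially the same route as the paper: you note $\omega\in\Delta_C\subseteq\Delta_{\{l_1,l_i\}}$, apply Proposition~\ref{prop:eigenvalue} to each $2$-cherry $\{l_1,l_i\}$, and assemble the resulting invariant eigenlines into $W_C$. Your explicit check that the $m-1$ vectors $\mathbf{e}_{l_1}-\mathbf{e}_{l_i}$ are linearly independent and span the $(m-1)$-dimensional space $W_C$ justifies the direct sum and the multiplicity bound more tightly than the paper's appeal to pairwise trivial intersections, which on its own does not force a direct sum once $m\ge 4$.
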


\begin{proof}
   First, notice that $\omega \in \bigcap_{i=1}^m \Delta_{\{l_1,l_i\}} (= \Delta_C)$. By Proposition~\ref{prop:eigenvalue}, for  $i = 2,\cdots, m$, $W_{\{l_1,l_i\}}$ is invariant under $d_\omega f$ and $\partial_1h(\omega_{l_1},\omega_w)$ is an eigenvalue of $d_\omega f$ in $W_{\{l_1,l_i\}}$-direction. Since $\left\{W_{\{l_1,l_i\}}\right\}_{i\in\{2,\cdots,m\}}$ is a family of $m-1$ subspaces whose pairwise intersections are trivial, it follows that $\partial_1h(\omega_{l_1},\omega_w)$ is an eigenvalue of $d_\omega f$ with multiplicity at least $m-1$ and
   $$W_C = \Big(\bigcap_{i=1}^m \Delta_{\{l_1,l_i\}}\Big)^\perp = \sum_{i=2}^m\left(\Delta_{\{l_1,l_i\}}\right)^\perp = \bigoplus_{i=2}^m W_{\{l_1,l_i\}}.$$ 
\end{proof}

\subsection{Lyapunov Stability of Cherry Synchrony}
\label{subsec:4.3}

We start this subsection with an example:

\begin{ex}
\label{ex:nonlinear}
Consider again the network of Fig.~\ref{fig:binary}. We choose the following admissible vector field $f~:~\R^7\to\R^7$ whose components are given by
    \begin{align*}
        f_1(x_1,\cdots,x_7) &= x_1(x_2+x_3)^2\\
        f_2(x_1,\cdots,x_7) &=  x_2\tanh(x_1^2+x_4^2+x_5^2)\\
        f_3(x_1,\cdots,x_7) &= x_3\tanh(x_1^2+x_6^2+x_7^2)\\
        f_4(x_1,\cdots,x_7) &= -x_4(1+e^{-x_2^2})\\
        f_5(x_1,\cdots,x_7) &=  -x_5(1+e^{-x_2^2})\\
        f_6(x_1,\cdots,x_7) &= -x_6(1+e^{-x_3^2})\\
        f_7(x_1,\cdots,x_7) &= -x_7(1+e^{-x_3^2}).
    \end{align*}
Let $\pi$ be the orthogonal projection of $x \in \R^7\setminus \Delta_{\{v_4,v_5\}} $ on $W_{\{v_4,v_5\}}$. Define $$\mathcal{V}(x) = \|\pi(x)\|^2 = \frac{(x_{4}-x_{5})^2}{2}. $$
The Lie derivative of $\mathcal{V}$ along $f$ is given by
$$\dot{\mathcal{V}}(x) = \nabla \mathcal{V}(x)\cdot f(x) = (x_{4}-x_{5})(f_4(x) - f_5(x)) = (x_{4}-x_{5})(h(x_4,x_2) - h(x_5,x_2)).$$
Applying the Mean Value Theorem, given $x\in\R^7$, there exists  $\xi$ between $x_{4}$ and $x_{5}$ such that
$$h(x_4,x_2) - h(x_5,x_2)= \partial_1h(\xi,x_2)(x_{4}-x_{5}).$$
For $(a,b)\in\R^2$, $\partial_1h(a,b)= -1 - e^{-b^2} < -1$, so for any $x\in\R^7$,
$$\dot{\mathcal{V}}(x) = \partial_1h(\xi,x_2)(x_{4}-x_{5})^2 < (-1)(x_4-x_5) = -2\frac{(x_4-x_5)^2}{2} = -2\mathcal{V}(x).$$
Since $\mathcal{V}(x) = 0$ if and only if $x\in\Delta_{\{v_4,v_5\}}$, this implies that $\Delta_{\{v_4,v_5\}}$ is globally exponentially attracting in the sense of Lyapunov.
  
\end{ex}

\begin{rem}
    Let $\Delta_{C_1,\cdots,C_k}$ be the synchrony subspace associated with a pack of cherries $C_1,\cdots,C_k$, $U$ a tubular neighborhood of the synchrony subspace and $f$ an admissible vector field. The condition $x\in U$ restricts to the differences between the leaves coordinates, with no restriction on the relation between a leaf coordinate and a center vertex coordinate. Since the coupling function $h$ is evaluated only at one leaf coordinate and the center vertex, it follows that every pair of coordinates arises as the arguments of $h$ for some $x\in U$. Therefore, requiring $\partial_1h(a,b)$ to be negative on the tubular neighborhood $U$ is equivalent to  $\partial_1h(a,b)$ to be negative for all $(a,b)\in\mathbb R^2$, and so the condition is necessarily global.
 \end{rem}

The preceding example extends to Theorem~\ref{thm:Lyapunov}, whose proof relies on the same arguments. These arguments do not depend on the choice of coordinates, but only on the underlying cherry structure. For the precise formulation, we adopt the following convention: the inequality involving \(\partial_1 h\) is to be interpreted componentwise with respect to the variables of the first cell.

\begin{theorem}
\label{thm:Lyapunov}
    Let $G$ be a tree network, $C= \{l_1,l_2\}$ be a cherry and $f$ an admissible vector field on $G$. If $\partial_1h < N$  for some negative constant $N$ then $\Delta_{\{l_1,l_2\}}$ is Lyapunov globally exponentially attracting. 
\end{theorem}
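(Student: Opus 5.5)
The plan is to repeat the argument of Example~\ref{ex:nonlinear} in general form, using the projection onto $W_{\{l_1,l_2\}}$ as a Lyapunov function. Since $l_1$ and $l_2$ are leaves with the common neighbour $w$, both have degree one, so they are input isomorphic. By admissibility there is a single coupling function $h$ with $f_{l_i}(x)=h(x_{l_i},x_w)$ for $i=1,2$, and the same $h$ serves both leaves. If the cell phase space is $\R^k$, then $W_{\{l_1,l_2\}}=\{x: x_{l_1}=-x_{l_2},\ x_v=0 \text{ for } v\neq l_1,l_2\}$. The orthogonal projection onto it is $\pi(x)=\frac12(x_{l_1}-x_{l_2})(\mathbf{e}_{l_1}-\mathbf{e}_{l_2})$, and I set
$$\mathcal V(x)=\|\pi(x)\|^2=\tfrac12\|x_{l_1}-x_{l_2}\|^2 .$$
This function vanishes exactly on $\Delta_{\{l_1,l_2\}}$ and is positive off it.

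Next I compute the Lie derivative along $f$:
$$\dot{\mathcal V}(x)=\langle x_{l_1}-x_{l_2},\,h(x_{l_1},x_w)-h(x_{l_2},x_w)\rangle.$$
In the multidimensional case I replace the Mean Value Theorem by its integral form:
$$h(x_{l_1},x_w)-h(x_{l_2},x_w)=\Big(\int_0^1\partial_1h\big(x_{l_2}+t(x_{l_1}-x_{l_2}),x_w\big)\,dt\Big)(x_{l_1}-x_{l_2}).$$
I read the hypothesis $\partial_1h<N$ componentwise, as the paper's convention prescribes. Concretely, I take it to mean that $\langle \partial_1 h(a,b)u,u\rangle\le N\|u\|^2$ for all $a,b$ and all $u$. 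In the one-dimensional case this is literally $\partial_1 h<N$. Averaging this bound over $t$ gives $\dot{\mathcal V}(x)\le N\|x_{l_1}-x_{l_2}\|^2=2N\,\mathcal V(x)$.

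Gronwall's inequality then gives $\mathcal V(x(t))\le e^{2Nt}\mathcal V(x(0))$ along every trajectory. Equivalently, $\|x_{l_1}(t)-x_{l_2}(t)\|\le e^{Nt}\|x_{l_1}(0)-x_{l_2}(0)\|$ holds for every initial condition and every $t$ in the forward interval of existence. Because $N<0$, this is the statement that $\Delta_{\{l_1,l_2\}}$ is globally exponentially attracting. The set is invariant by Theorem~\ref{thm:no_exotic_trees}, or more directly because it is a fixed-point subspace of the transposition of the cherry. No bound from below is needed, since the estimate is uniform in $x_w$ and in all other coordinates.

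I expect the main obstacle to be the interpretation of ``$\partial_1h<N$ componentwise'' when the cell phase spaces have dimension greater than one. An entrywise bound on the Jacobian $\partial_1 h$ does not by itself control the quadratic form $\langle \partial_1h\, u,u\rangle$. I would therefore state the hypothesis precisely as negative definiteness of the symmetric part of $\partial_1 h$, uniformly bounded by $N$. Alternatively, I would reduce to the one-dimensional setting, as in the proof of Proposition~\ref{prop:eigenvalue}, and there the argument is exactly the Mean Value Theorem computation of Example~\ref{ex:nonlinear}. One smaller point also needs checking: that trajectories exist for forward time. Blow-up cannot occur in the difference coordinate, and the conclusion is stated on the maximal interval in any case.
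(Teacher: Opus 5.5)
Your proposal is correct and is essentially the paper's argument. The paper proves the theorem by repeating Example~\ref{ex:nonlinear}: it takes $\mathcal V(x)=\|\pi(x)\|^2=\frac12\|x_{l_1}-x_{l_2}\|^2$, applies the Mean Value Theorem to $h(x_{l_1},x_w)-h(x_{l_2},x_w)$, and concludes $\dot{\mathcal V}\le 2N\mathcal V$. Your integral form of the Mean Value Theorem with the hypothesis $\langle \partial_1h\,u,u\rangle\le N\|u\|^2$ is the right way to make the higher-dimensional case rigorous, since a merely entrywise bound on $\partial_1 h$ would not suffice; and for invariance the relevant argument is the transposition one you give, not Theorem~\ref{thm:no_exotic_trees}.
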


The above Lyapunov stability condition for  2-cherry synchrony extends to a pack of cherries:

\begin{cor} 
\label{cor:final}
   Let $G$ be a tree graph, $C_1,\cdots,C_k$ a pack of cherries and $f$ an admissible vector field. If $\partial_1h < N$  for some negative constant $N$ then $\Delta_{C_1,\cdots,C_k}$ is Lyapunov globally exponentially attracting. 
\end{cor}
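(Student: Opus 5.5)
The plan is to build a single Lyapunov function that adds up the contributions of all the leaf differences in the pack. For each $m_i$-cherry $C_i=\{l^i_1,\dots,l^i_{m_i}\}$ with center $w_i$, let $\pi$ be the orthogonal projection onto $W_{C_1,\dots,C_k}$. Because the sets $C_i$ are pairwise disjoint, $\Delta_{C_1,\dots,C_k}=\bigcap_i \Delta_{C_i}$ and $W_{C_1,\dots,C_k}=\bigoplus_i W_{C_i}$ is an orthogonal sum. By Corollary~\ref{cor:m-cherry}, each $W_{C_i}$ is spanned by the differences $\mathbf e_{l^i_1}-\mathbf e_{l^i_j}$. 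Instead of $\|\pi(x)\|^2$ directly, it is more convenient to use the equivalent quadratic form
$$\mathcal V(x)=\sum_{i=1}^k\ \sum_{1\le j<r\le m_i}\tfrac12\,\|x_{l^i_j}-x_{l^i_r}\|^2 .$$
On each block this equals $\tfrac{m_i}{2}$ times the squared distance of $(x_{l^i_1},\dots,x_{l^i_{m_i}})$ to the diagonal. Hence $\mathcal V$ is comparable to $\|\pi(x)\|^2$ by constants, and $\mathcal V(x)=0$ exactly when $x\in\Delta_{C_1,\dots,C_k}$.

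First I will compute the Lie derivative. All leaves $l^i_j$ of a fixed cherry have degree one, so by admissibility they share the same governing function: $f_{l^i_j}(x)=h(x_{l^i_j},x_{w_i})$ with a common center argument $x_{w_i}$. Therefore
$$\dot{\mathcal V}(x)=\sum_{i}\sum_{j<r}\big\langle x_{l^i_j}-x_{l^i_r},\,h(x_{l^i_j},x_{w_i})-h(x_{l^i_r},x_{w_i})\big\rangle .$$
Next I apply the mean value theorem in its integral form along the segment joining $x_{l^i_r}$ to $x_{l^i_j}$, with the second argument frozen at $x_{w_i}$. This gives $h(a,b)-h(a',b)=\big(\int_0^1\partial_1h(a'+t(a-a'),b)\,dt\big)(a-a')$. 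Each term then equals $\langle u,Mu\rangle$ with $u=x_{l^i_j}-x_{l^i_r}$ and $M$ an average of values of $\partial_1h$.

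The main obstacle is the case of cell phase spaces of dimension greater than one, where $\partial_1h$ is a matrix. I read the hypothesis $\partial_1h<N$ "componentwise with respect to the variables of the first cell" as the statement that the symmetric part of $\partial_1h$ is bounded above by $N\,\mathrm{Id}$. An average of such matrices keeps the bound, so $\langle u,Mu\rangle\le N\|u\|^2$. In the one-dimensional case this is literally the scalar inequality of Example~\ref{ex:nonlinear}. Summing over all pairs and all cherries yields $\dot{\mathcal V}\le 2N\mathcal V$ with $2N<0$.

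Finally, Grönwall gives $\mathcal V(x(t))\le e^{2Nt}\mathcal V(x(0))$ for every initial condition. By the norm equivalence, $\|\pi(x(t))\|\le K e^{Nt}\|\pi(x(0))\|$ for all $x(0)\in P$. So $\Delta_{C_1,\dots,C_k}$, which is flow-invariant by Theorem~\ref{thm:no_exotic_trees} (it is a fixed-point subspace), is globally exponentially attracting. Because the sum is taken over disjoint cherries, this argument also covers the case of several cherries sharing a center: leaves with the same center $w$ all see the same $x_w$, and pairs are only ever formed inside one $C_i$.
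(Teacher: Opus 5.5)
Your proof is correct, but it is organized differently from the paper's. The paper argues in two lines. Any two leaves of $C_i$ form a $2$-cherry with center $w_i$, so Theorem~\ref{thm:Lyapunov} makes each $\Delta_{\{l_i,j\}}$ globally exponentially attracting. The claim is then read off from $\Delta_{C_1,\ldots,C_k}=\bigcap_{i}\bigcap_{j\in C_i\setminus\{l_i\}}\Delta_{\{l_i,j\}}$. The paper leaves implicit why exponential attraction to each of finitely many subspaces passes to their intersection. It does pass, because for linear subspaces the distance to the intersection is at most a constant times the sum of the distances to each. That constant is genuinely needed, since e.g.\ $W_{\{l_1,l_2\}}$ and $W_{\{l_1,l_3\}}$ are not orthogonal. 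Your $\mathcal V$ is exactly the sum of the $2$-cherry Lyapunov functions over all pairs inside each cherry. The identity $\mathcal V=\sum_i\frac{m_i}{2}\|\pi_{W_{C_i}}(x)\|^2$ turns the implicit step into an explicit norm comparison, with rate $e^{Nt}$ and constant $K=\sqrt{\max_i m_i/\min_i m_i}$. It also avoids relying on Theorem~\ref{thm:Lyapunov}, whose proof the paper only indicates through Example~\ref{ex:nonlinear}. The paper's route buys brevity and modularity; yours buys self-containedness.

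Two side remarks. First, your matrix reading of $\partial_1h<N$ for higher-dimensional cells is not just convenient but necessary; an entrywise (or diagonal) reading fails. Take $N=-\frac12$ and $h(a,b)=Ma$, where $M$ has diagonal entries $-1$ and off-diagonal entries $-10$; this satisfies every entrywise bound. Yet $u=x_{l_1}-x_{l_2}$ obeys $\dot u=Mu$, and $(1,-1)$ is an eigenvector of $M$ with eigenvalue $9$. Second, Theorem~\ref{thm:no_exotic_trees} is the wrong reference for flow-invariance, since it goes from balanced colorings to fixed-point colorations. What you need is the fact from Section~\ref{sec:2} that $\bowtie_\Gamma$ is balanced, hence robustly polysynchronous. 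More simply, your own estimate $\mathcal V(x(t))\le e^{2Nt}\mathcal V(x(0))$ already forces solutions starting in $\Delta_{C_1,\ldots,C_k}$ to remain there.
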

     
\begin{proof}
    For each $i\in\{1,\cdots,m\}$, let $l_i \in C_i$. The result follows directly from Theorem~\ref{thm:Lyapunov} and the fact that
    $$
    \Delta_{C_1,\cdots,C_k} = \bigcap_{i=1}^n \Delta_{C_i} = \bigcap_{i=1}^m\bigcap_{j\in C_i\setminus\{l_i\}} \Delta_{\{l_i,j\}}.
    $$
    \mbox{}
\end{proof}

\begin{acknowledgement}
This research has been financed by the S\~ao Paulo Research Foundation FAPESP, Brazil. Grant numbers
2024/08713-0 (N. Brito) and 2019/21181-0 (M. Manoel).
\end{acknowledgement}

%%%%%%%%%%%%%%%%%%%%%%%% referenc.tex %%%%%%%%%%%%%%%%%%%%%%%%%%%%%%
% sample references
% %
% Use this file as a template for your own input.
%
%%%%%%%%%%%%%%%%%%%%%%%% Springer-Verlag %%%%%%%%%%%%%%%%%%%%%%%%%%
%
% BibTeX users please use
% \bibliographystyle{}
% \bibliography{}
%

\end{document}